\theoremstyle{plain}
\newtheorem{thm}{Theorem}[section]
\newtheorem*{thma}{Theorem A}
\newtheorem*{thmb}{Theorem B}
\newtheorem{lem}[thm]{Lemma}
\newtheorem{prop}[thm]{Proposition}
\newtheorem{cor}[thm]{Corollary}
\theoremstyle{definition}
\newtheorem{defn}[thm]{Definition}
\theoremstyle{remark}
\newtheorem{rem}[thm]{Remark}
\newtheorem*{ex}{Example}
\numberwithin{equation}{section}
\newcommand{\wt}{\widetilde}
\newcommand{\R}{\mathbb{R}}
\newcommand{\N}{\mathbb{N}}
\newcommand{\Z}{\mathbb{Z}}
\newcommand{\cC}{\mathcal{C}}
\newcommand{\G}{\mathcal{G}}
\newcommand{\restrictionmap}[2]{{#1}\mathpunct\restriction\hbox{}_{#2}}
\providecommand{\abs}[1]{\left\lvert#1\right\rvert}
\DeclareMathOperator{\id}{Id}
\DeclareMathOperator{\cl}{cl}
\DeclareMathOperator{\Span}{span}
\DeclareMathOperator{\Deg}{Deg}
\title[Otopy classification in Hilbert space]
{Otopy classification of gradient compact perturbations of identity in Hilbert space} 
\author[P. Bart{\l}omiejczyk and P. Nowak-Przygodzki]{Piotr Bart{\l}omiejczyk
and Piotr Nowak-Przygodzki}
\address{Faculty of Applied Physics and Mathematics,
Gda{\'n}sk University of Technology,
Gabriela Narutowicza 11/12,
80-233 Gda{\'{n}}sk, Poland}
\email{piobartl@pg.edu.pl, piotrnp@wp.pl}
\date{\today}
\subjclass[2010]{Primary: 55Q05; Secondary: 47H11}
\keywords{Leray-Schauder degree, Hilbert space, gradient, otopy.}
\begin{document}

\begin{abstract}
We prove that the inclusion of the space 
of gradient local maps into the space of
all local maps from Hilbert space to itself
induces a bijection between the sets of 
the respective otopy classes of these maps,
where by a local map we mean a compact perturbation 
of identity with a compact preimage of zero.
\end{abstract}

\maketitle

\section*{Introduction} 
\label{sec:intro}

In 1985 E. N. Dancer (\cite{D}) discovered that there is a better
topological invariant than the equivariant degree for gradient maps 
in the case of $S^1$ group action, which means that in that case
there are more equivariant gradient homotopy classes than
equivariant homotopy ones. 
A few years later A. Parusi\'nski (\cite{P})
showed that for a disc without group action there is 
no better invariant for gradient maps than the usual topological degree.
In other words, there is a bijection between sets 
of gradient and continuous homotopy classes. 
In 2005 E. N. Dancer, K. G\k{e}ba,
S. Rybicki (\cite{DGR}) provided the homotopy 
classification of equivariant gradient maps on the disc 
in the case of a compact Lie group action.
In their proof the authors used the notion of otopy introduced
in the 1990's by J. C. Becker and D. H. Gottlieb (\cite{BG1,BG2}).
Later, in \cite{BGI,GI} the equivariant and equivariant 
gradient otopy classifications instead of homotopy ones were studied.

The investigations mentioned above suggest the following general approach.
Let us consider vector fields on open domains contained
in some Riemannian manifold $X$ (in some cases equipped with
an action of a compact Lie group). If the set of zeros
of such a vector field is compact, we call them \emph{local maps}.
We introduce the following notation. Let
\begin{itemize}
	\item $\mathcal{C}(X)$ ($\mathcal{G}(X)$) be the set of continuous
	(gradient) local maps, 
	\item $\mathcal{C}[X]$ ($\mathcal{G}[X]$) be the set of usual
	(gradient) otopy classes of continuous (gradient) local maps, 
	\item $\iota\colon\mathcal{G}[X]\to\mathcal{C}[X]$ be the function
	between the respective otopy classes induced by the inclusion
	$\mathcal{G}(X)\hookrightarrow\mathcal{C}(X)$.
\end{itemize}
We will say that the inclusion  $\mathcal{G}(X)\hookrightarrow\mathcal{C}(X)$
has the \emph{Parusi\'nski property} if
$\iota\colon\mathcal{G}[X]\to\mathcal{C}[X]$ is bijective.
In a series of papers \cite{BP1,BP2,BP3,BP4}
we proved that the respective inclusions have
the Parusi\'nski property if $X$ is an open subset of $\R^n$
or, more generally, a Riemannian manifold 
(not necessarily compact) without boundary.
On the other hand, in \cite{BP5} we showed that
for an open invariant subset of 
a finite-dimensional representation
of a compact Lie group the inclusion $\iota$ does not 
have the Parusi\'nski property in general.
Moreover, we gave in that case necessary and sufficient conditions
for the Parusi\'nski property in terms of Weyl group dimensions,
which explains the phenomenon discovered by E. N. Dancer in 1985.

The presented paper is a natural continuation of our previous work.
Namely, the main aim of this article is to prove that the inclusion
of the space of gradient local maps into the space
of all local maps has the Parusi\'nski property
if $X$ is an open subset of a real 
separable Hilbert space. By \emph{local map} we mean here
a compact perturbation of identity with a compact
preimage of zero. It is worth pointing out that in the proof
of our main theorem we use a topological invariant,
which is a version of the classical Leray-Schauder degree.
But in our construction we manage to guarantee
that finite-dimensional approximations of gradient maps are gradient,
which is crucial for the proof of Theorem~A.
The results presented here may also be treated
as an introduction to the study of Parusi\'nski property
for a representation of a compact Lie group G in a Hilbert space. 

The organization of the paper is as follows.
Section~\ref{sec:basic} contains some preliminaries.
In Section~\ref{sec:deg} we describe a construction
of the topological degree used in the proof of Theorem A.
Our main results are stated in Section~\ref{sec:main}
and proved in Section~\ref{sec:proof}. 
Final remarks are contained in Section~\ref{sec:final}. 
Finally, Appendix~\ref{sec:appendixA} presents technical facts
used in Section~\ref{sec:proof}. 

\section{Basic definitions} 
\label{sec:basic}

Assume that
\begin{itemize}
	\item $E$ is an infinite-dimensional real separable Hilbert space,
	\item $\Omega$ is an open connected subset of $E$.
\end{itemize}
Recall that a continuous map from a metric space $A$ 
into a metric space $B$ is called \emph{compact} if it
takes bounded subsets of $A$ into relatively compact ones of $B$.
Some authors use the term \emph{completely continuous} 
instead of compact.

\subsection{Local maps in Hilbert space}
We write $f\in\cC(\Omega)$ if
\begin{itemize}
  \item $f\colon D_f\subset\Omega\to E$,
	\item $D_f$ is an open subset of $\Omega$,
	\item $f(x)=x-F(x)$, where $F\colon D_f\to E$ is compact,
	\item $f^{-1}(0)$ is compact.
\end{itemize}
Elements of $\cC(\Omega)$ are called \emph{local maps}.

It is easy to check that the compactness of $F$ implies that
in the above definition the last condition
that $f^{-1}(0)$ is compact can be equivalently replaced
by the assumption that $f^{-1}(0)$ 
is bounded and closed in $E$.
From this observation follows that if $f$ is defined on $\cl U$,
where $U$ is open and bounded, and $f$ does not vanish on the boundary
then $\restrictionmap{f}{U}$ is a local map.

Moreover, we write $f\in\G(\Omega)$ if 
\begin{itemize}
	\item $f\in\cC(\Omega)$,
	\item $f$ is gradient i.e.\ there is a $C^1$-function
	$\varphi\colon D_f\to\R$ such that $f=\nabla\varphi$.
\end{itemize}
Elements of $\G(\Omega)$ are called
\emph{gradient local maps}.

\subsection{Sets of otopy classes in Hilbert space}
A map $h\colon\Lambda\subset I\times\Omega\to E$ 
is called an \emph{otopy} if
\begin{itemize}
	\item $\Lambda$ is an open subset of $I\times\Omega$,
	\item $h(t,x)=x-F(t,x)$, where $F\colon\Lambda\to E$ is compact,
	\item $h^{-1}(0)$ is compact.
\end{itemize}

Similarly as in the case of local maps,
the assumption that the set $h^{-1}(0)$ is bounded
and closed in $I\times E$ implies its compactness.
In particular, if $\Lambda\subset I\times\Omega$ is 
open and bounded, $h$ is defined on $\cl\Lambda$ 
(not only on $\Lambda$) and
does not vanish on $\partial\Lambda$ then
$\restrictionmap{h}{\Lambda}$ is an otopy.

From the above and an easy to check fact
that a straight-line homotopy between 
two compact maps is compact we obtain the following result. 

\begin{lem}\label{lem:bound}
Assume that $U\subset E$ is open and bounded and
$h\colon I\times\cl U\to E$ is a straight-line homotopy.
If $h(t,x)\neq 0$ for $t\in I$ and $x\in\partial U$
and $\restrictionmap{h_0}{U}$ and $\restrictionmap{h_1}{U}$
are local maps, then $\restrictionmap{h}{I\times U}$ is an otopy.
\end{lem}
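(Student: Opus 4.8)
The plan is to verify directly the three defining properties of an otopy for $\restrictionmap{h}{I\times U}$. Since $\restrictionmap{h_0}{U}$ and $\restrictionmap{h_1}{U}$ are local maps, we may write $h_0(x)=x-F_0(x)$ and $h_1(x)=x-F_1(x)$ with $F_0,F_1$ compact. Writing out the straight-line homotopy,
$$h(t,x)=(1-t)h_0(x)+t\,h_1(x)=x-\bigl[(1-t)F_0(x)+t\,F_1(x)\bigr],$$
we see that $h(t,x)=x-F(t,x)$ with $F(t,x)=(1-t)F_0(x)+t\,F_1(x)$, which is compact by the quoted fact that a straight-line homotopy between compact maps is compact. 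Since $U$ is open (and contained in $\Omega$), the set $I\times U$ is open in $I\times\Omega$. This settles the first two conditions, so everything reduces to the compactness of the zero set.

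Next I would invoke the observation recorded just before the lemma: to show that $Z:=(\restrictionmap{h}{I\times U})^{-1}(0)$ is compact it suffices to show that $Z$ is bounded and closed in $I\times E$. Boundedness is immediate, since $Z\subset I\times U$ and both $I$ and $U$ are bounded. The only substantive point is therefore closedness.

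For closedness I would take a sequence $(t_n,x_n)\in Z$ converging to some $(t_0,x_0)\in I\times E$ and show the limit lies in $Z$. As $x_n\in U$ and $x_n\to x_0$, we get $x_0\in\cl U$; since $h$ is continuous on the whole of $I\times\cl U$, passing to the limit gives $h(t_0,x_0)=\lim_n h(t_n,x_n)=0$. The crux is then to rule out $x_0\in\partial U$, and this is exactly where the hypothesis $h(t,x)\neq 0$ for $x\in\partial U$ is used: it forces $x_0\in U$, hence $(t_0,x_0)\in Z$. This last step — preventing zeros from accumulating on the boundary — is the only real obstacle, and the boundary non-vanishing assumption is precisely what is needed to overcome it.
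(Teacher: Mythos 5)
Your proof is correct and follows essentially the same route as the paper, which disposes of the lemma in one sentence by citing the two observations preceding it: the straight-line combination of compact maps is compact (giving the form $\id - F$), and a map defined on the closure of a bounded set and nonvanishing on the boundary has a zero set that is bounded and closed in $I\times E$, hence compact. Your write-up simply unpacks these two citations — in particular the sequence argument ruling out zeros accumulating on $I\times\partial U$ — so there is nothing to add.
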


An otopy is called \emph{gradient}, if additionally
\[
F(t,x)=\nabla_x\eta(t,x),
\]
where $\eta\colon\Lambda\to\R$ 
is $C^1$ with respect to~$x$.

Given an otopy
$h\colon\Lambda\subset I\times\Omega\to E$ 
we can define for each  $t\in I$:
\begin{itemize}
	\item sets $\Lambda_t=\{x\in\Omega\mid(t,x)\in\Lambda\}$,
	\item maps $h_t\colon\Lambda_t\to E$ with $h_t(x)=h(t,x)$.
\end{itemize}
If $h$ is a (gradient) otopy, we can say that  
$h_0$ and $h_1$ are \emph{(gradient) otopic}.
Observe that (gradient) otopy establishes
an equivalence relation in $\cC(\Omega)$ ($\G(\Omega)$).
Sets of otopy classes of the respective relation
will be denoted by $\cC[\Omega]$ and $\G[\Omega]$.

Observe that if $f$ is a (gradient) local map and $U$ is 
an open subset of $D_f$ such that $f^{-1}(0)\subset U$, 
then $f$ and $\restrictionmap{f}{U}$ are (gradient) otopic.
This property of (gradient) local maps
will be called \emph{restriction property}.
In particular, if $f^{-1}(0)=\emptyset$ then $f$ 
is (gradient) otopic to the empty map.

\begin{rem}\label{rem:finite}
It is worth pointing out that in \cite{BP1,BP3} we consider
local maps and otopies in finite dimensional spaces.
Unlike as in the case of Hilbert space we assume in the definition
of both a local map and an otopy only the condition that
the preimage of zero is compact. There is no need to assume
the form $\id-F$ with $F$ compact. However, subsequently
in the proof of the main result of this paper we will need
the form `identity minus compact' in a finite dimensional case.
This will be guaranteed by boundedness of a domain of a map.
\end{rem}

\section{Definition of degree \texorpdfstring{$\Deg$}{Deg}} 
\label{sec:deg}

In this section we give a definition of the degree
{$\Deg\colon\cC(\Omega)\to\Z$
and prove its correctness and otopy invariance.

\subsection{Preparatory lemmas}
Let us start with the following lemma concerning $f\in\cC(\Omega)$.

\begin{lem}\label{lem:zero}
Assume that $X\subset D_f$ is closed in $E$ and bounded.
If $X\cap f^{-1}(0)=\emptyset$ then there is $\epsilon>0$ such that
$\abs{f(x)}\ge2\epsilon$ for all $x\in X$.
\end{lem}

\begin{proof}
Suppose that there is a sequence $\{x_n\}\subset X$
such that $\lim f(x_n)=0$. By compactness of $F$ there is 
a subsequence $\{x_{k_n}\}$ of $\{x_n\}$ 
such that $\lim F(x_{k_n})=y$
and therefore  $\lim x_{k_n}=y$. Since $X$ is closed,
we have $y\in X$ and, in consequence, $f(y)\neq0$.
But $f(y)=\lim f(x_{k_n})=0$, a contradiction.
\end{proof}

Observe that there is an open bounded set $U$ such that
\[
f^{-1}(0)\subset U\subset\cl U\subset D_f.
\]

\begin{cor}\label{cor:epsilon}
There is $\epsilon>0$ such that
$\abs{f(x)}\ge2\epsilon$ for all $x\in \partial U$.
\end{cor}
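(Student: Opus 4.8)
The plan is to deduce this directly from Lemma~\ref{lem:zero} by specializing to the set $X=\partial U$, where $U$ is the open bounded set furnished by the observation immediately preceding the corollary, satisfying $f^{-1}(0)\subset U\subset\cl U\subset D_f$. The entire argument reduces to checking that $\partial U$ meets the three standing hypotheses of that lemma and then invoking it.

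First I would record that $\partial U$ is closed in $E$, being the topological boundary of a set, and bounded, since $\partial U\subset\cl U$ and $U$ is bounded by construction. Next, from the containment chain $\cl U\subset D_f$ I would conclude $\partial U\subset\cl U\subset D_f$, so the boundary lies inside the domain of $f$ as required.

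The one point deserving genuine attention is the disjointness $\partial U\cap f^{-1}(0)=\emptyset$. Here I would use that $U$ is \emph{open}, so that $U\cap\partial U=\emptyset$, combined with the inclusion $f^{-1}(0)\subset U$; together these force $f^{-1}(0)\cap\partial U=\emptyset$. With this verified, all hypotheses of Lemma~\ref{lem:zero} hold for $X=\partial U$, and the lemma produces an $\epsilon>0$ with $\abs{f(x)}\ge2\epsilon$ for every $x\in\partial U$, which is exactly the claim.

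I do not anticipate any real obstacle: the corollary is essentially a routine specialization of the preceding lemma, and the only subtlety is ensuring the boundary avoids the zero set, which rests entirely on the openness of $U$ and the fact that $U$ was chosen to contain $f^{-1}(0)$.
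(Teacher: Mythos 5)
Your proof is correct and is precisely the argument the paper intends: the corollary is stated without proof because it is the immediate specialization of Lemma~\ref{lem:zero} to $X=\partial U$, with exactly the verifications you supply (closedness and boundedness of $\partial U$, the inclusion $\partial U\subset\cl U\subset D_f$, and disjointness from $f^{-1}(0)$ via openness of $U$).
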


Let $\{e_i\mid i\in\N\}$ be an orthonormal basis in $E$.
Let us introduce the following notation for $n\in\N$:
\begin{itemize}
	\item $V_n=\Span\{e_1,\dotsc,e_n\}$,
	\item $U_n=U\cap V_n$ for any $U\subset E$,
	\item $f_n(x)=x-P_nF(x)$, where $P_n\colon E\to V_n$
	is an orthogonal projection.
\end{itemize}

Throughout the paper we will make use of the following
well-known characterization of relatively 
compact sets in Hilbert space.

\begin{prop}\label{prop:compact}
A set $X\subset E$ is relatively compact if{f} it is bounded and
\[
\forall\delta>0\,
\exists n_0\,
\forall n\ge n_0\,
\forall x\in X\,\abs{x-P_nx}<\delta.
\]
\end{prop}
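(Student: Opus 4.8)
The plan is to prove both implications by relating the stated tail condition to total boundedness, using the completeness of $E$ together with the fact that bounded subsets of the finite-dimensional spaces $V_n$ are relatively compact. Recall that $x-P_nx=\sum_{i>n}\langle x,e_i\rangle e_i$ is the tail of the expansion of $x$, so the displayed condition says precisely that these tails are uniformly small over $X$.

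For the forward implication, suppose $X$ is relatively compact, so $\cl X$ is compact and in particular $X$ is bounded. To obtain the tail estimate I would observe that each function $g_n(x)=\abs{x-P_nx}$ is continuous, and that $g_n(x)^2=\sum_{i>n}\abs{\langle x,e_i\rangle}^2$ decreases monotonically to $0$ as $n\to\infty$ for every fixed $x$, since $\{e_i\}$ is an orthonormal basis and hence $P_nx\to x$. By Dini's theorem, monotone pointwise convergence of continuous functions to a continuous limit is uniform on the compact set $\cl X$, which is exactly the assertion that for every $\delta>0$ there is $n_0$ with $g_n(x)<\delta$ for all $n\ge n_0$ and all $x\in X$. (One can also avoid Dini and argue by covering: pick finitely many balls of radius $\delta/3$ about points $y_1,\dotsc,y_k$ covering $X$, choose $n_0$ so that $\abs{y_j-P_ny_j}<\delta/3$ for all $j$ and $n\ge n_0$, and for $x\in X$ near $y_j$ estimate $\abs{x-P_nx}\le\abs{x-y_j}+\abs{y_j-P_ny_j}+\abs{P_n(y_j-x)}<\delta$, using $\abs{P_n(y_j-x)}\le\abs{y_j-x}$.)

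For the reverse implication, suppose $X$ is bounded, say $\abs{x}\le M$ for all $x\in X$, and satisfies the tail condition. Since $E$ is complete it suffices to prove that $X$ is totally bounded. Fix $\epsilon>0$ and choose $n_0$ with $\abs{x-P_{n_0}x}<\epsilon/2$ for all $x\in X$. The image $P_{n_0}(X)$ is a bounded subset of the finite-dimensional space $V_{n_0}$, hence relatively compact and in particular totally bounded, so it can be covered by finitely many balls of radius $\epsilon/2$ centered at $z_1,\dotsc,z_k$. Given $x\in X$, there is $j$ with $\abs{P_{n_0}x-z_j}<\epsilon/2$, and then $\abs{x-z_j}\le\abs{x-P_{n_0}x}+\abs{P_{n_0}x-z_j}<\epsilon$. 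Thus the balls of radius $\epsilon$ about $z_1,\dotsc,z_k$ cover $X$, proving total boundedness.

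I expect the only delicate point to be the uniformity in the forward direction: the pointwise fact $P_nx\to x$ is immediate, and the real content is upgrading it to uniformity over $X$, which is exactly where compactness must be used, either through Dini's theorem or the finite subcover argument. The reverse direction is the standard reduction of relative compactness in an infinite-dimensional space to relative compactness of finite-dimensional approximations, and presents no genuine obstacle once total boundedness is identified as the correct target.
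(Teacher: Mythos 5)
Your proof is correct. There is, however, nothing in the paper to compare it against: the proposition is stated there as a ``well-known characterization'' of relatively compact sets in Hilbert space and is used as a black box (e.g.\ in Lemma \ref{lem:N}), with no proof given. Your argument is the standard one and both halves are sound: for necessity, either Dini's theorem (valid since $g_n(x)^2=\sum_{i>n}\langle x,e_i\rangle^2$ decreases pointwise to $0$ and each $g_n$ is continuous on the compact set $\cl X$) or the $\delta/3$-covering argument, where you correctly use $\abs{P_n(y_j-x)}\le\abs{y_j-x}$ since $P_n$ is an orthogonal projection; for sufficiency, the reduction to total boundedness via the finitely many $\epsilon/2$-balls covering the bounded set $P_{n_0}(X)\subset V_{n_0}$, with completeness of $E$ supplying the final step from totally bounded to relatively compact. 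The one point worth making explicit is that the uniformity quantifier $\forall n\ge n_0$ in the statement comes for free from the monotonicity of the tails, which you noted.
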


Now we are in position to show that for $n$ large enough
$f$ and $f_n$ are close to each other on $\cl U$.
Next from this observation we conclude that $f_n$ 
are uniformly separated from $0$ on $\partial U$.
\begin{lem}\label{lem:N}
There is $N$ such that for all $n\ge N$ and all $x\in\cl U$
we have:
\[
\abs{f(x)-f_n(x)}<\epsilon\quad
\text{and consequently}\quad
\abs{f_{n+1}(x)-f_n(x)}<\epsilon.
\]
\end{lem}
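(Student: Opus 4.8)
The plan is to prove the first inequality $\abs{f(x)-f_n(x)}<\epsilon$ uniformly on $\cl U$ by exploiting the compactness of $F$ together with the characterization of relatively compact sets given in Proposition~\ref{prop:compact}; the second inequality will then follow as an easy consequence. The starting observation is that
\[
f(x)-f_n(x)=\bigl(x-F(x)\bigr)-\bigl(x-P_nF(x)\bigr)=F(x)-P_nF(x),
\]
so the problem reduces entirely to controlling how well the finite-rank projections $P_n$ approximate $F$ on the set $\cl U$. The key point is that $\cl U$ is bounded, hence $F(\cl U)$ is relatively compact because $F$ is a compact map.

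First I would set $X=\overline{F(\cl U)}$, which is a compact (hence bounded and closed) subset of $E$. Applying Proposition~\ref{prop:compact} to this set $X$ with the choice $\delta=\epsilon$ yields an index $N$ such that for all $n\ge N$ and all $y\in X$ we have $\abs{y-P_ny}<\epsilon$. Specializing $y=F(x)$ for $x\in\cl U$, and noting that $F(x)\in F(\cl U)\subset X$, gives
\[
\abs{f(x)-f_n(x)}=\abs{F(x)-P_nF(x)}<\epsilon
\]
for every $n\ge N$ and every $x\in\cl U$, which is exactly the first claim.

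For the second inequality I would invoke the first one at both indices $n$ and $n+1$ (legitimate since $n+1\ge n\ge N$) and combine them by the triangle inequality, writing $f_{n+1}-f_n=(f-f_n)-(f-f_{n+1})$, so that
\[
\abs{f_{n+1}(x)-f_n(x)}\le\abs{f(x)-f_n(x)}+\abs{f(x)-f_{n+1}(x)}<2\epsilon.
\]
Here I would note that this gives the bound $2\epsilon$ rather than $\epsilon$; the stated estimate with $\epsilon$ on the right can be recovered by simply enlarging $N$ at the outset, namely by choosing $N$ so that Proposition~\ref{prop:compact} delivers $\abs{y-P_ny}<\epsilon/2$ for all $y\in X$ and $n\ge N$, whence $\abs{f(x)-f_n(x)}<\epsilon/2$ and the triangle inequality closes the gap.

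The only subtle point — and the one I would be most careful about — is verifying that $F(\cl U)$ is genuinely relatively compact so that Proposition~\ref{prop:compact} applies. This rests on $\cl U$ being bounded, which is guaranteed by the choice of $U$ made just before Corollary~\ref{cor:epsilon}, together with the hypothesis that $F$ is a compact map and hence sends the bounded set $\cl U$ to a relatively compact set. No deep obstacle arises; the argument is essentially a transcription of the definition of compactness of $F$ through the quantitative criterion of Proposition~\ref{prop:compact}, with the factor-of-two bookkeeping being the only thing requiring attention.
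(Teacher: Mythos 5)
Your proof is correct, and for the first inequality it is essentially the paper's argument: both reduce $f-f_n$ to $F-P_nF$, use compactness of $F$ on the bounded set $\cl U$ to conclude that $F(\cl U)$ is relatively compact, and then invoke Proposition~\ref{prop:compact}. The only divergence is how you get the second inequality. You use the triangle inequality, which yields $2\epsilon$, and then repair the constant by running the whole argument with $\epsilon/2$; this is valid, but it alters the choice of $N$. The sharper observation, which is what the paper's word \emph{consequently} silently rests on, is orthogonality:
\[
f_{n+1}(x)-f_n(x)=(P_n-P_{n+1})F(x)=-\langle F(x),e_{n+1}\rangle e_{n+1},
\]
which is exactly the orthogonal projection of the tail $F(x)-P_nF(x)=\sum_{i>n}\langle F(x),e_i\rangle e_i$ onto $\Span\{e_{n+1}\}$, so that
\[
\abs{f_{n+1}(x)-f_n(x)}\le\abs{f(x)-f_n(x)}<\epsilon
\]
with the \emph{same} $N$ and no halving. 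Either route proves the lemma; yours costs only a factor-of-two in bookkeeping, while the orthogonality argument makes the second estimate a genuine one-line consequence of the first. (Incidentally, you correctly work on all of $\cl U$, where the paper's proof text slips and writes $\partial U$ even though the statement requires $\cl U$.)
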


\begin{proof}
Since $\cl U$ is bounded,
$F(\cl U)$ is relatively compact.
By Proposition \ref{prop:compact}
there is $N$ such that for all $n\ge N$ 
and all $x\in\partial U$ we have
$\abs{F(x)-P_nF(x)}<\epsilon$.
Since $\abs{f(x)-f_n(x)}=\abs{F(x)-P_nF(x)}$,
we obtain our assertion.
\end{proof}

From now on let $N$ be chosen 
as in the previous lemma.

\begin{lem}\label{lem:boundary}
$\abs{f_n(x)}\ge\epsilon$
for $x\in\partial U$ and $n\ge N$.
\end{lem}
\begin{proof}
It is an easy consequence of
Corollary \ref{cor:epsilon} and 
Lemma \ref{lem:N}.
\end{proof}

\subsection{Definition of\ \texorpdfstring{$\Deg$}{Deg}}
In what follows, $\deg$ denotes  the classical Brouwer degree.
The infinite-dimensional degree that we are going to define
in this paper will be denoted by $\Deg$.

Since $\partial U_n\subset\partial U$ for any $n$,
the next result follows from Lemma~\ref{lem:boundary}. 

\begin{cor}\label{cor:well}
$\deg(f_n,U_n)$ is well-defined for $n\ge N$.
\end{cor}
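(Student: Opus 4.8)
The plan is to verify directly the three conditions that make the classical Brouwer degree $\deg(f_n,U_n)$ meaningful: that $U_n$ is a bounded open subset of the finite-dimensional space $V_n$, that $\restrictionmap{f_n}{V_n}$ is a continuous self-map of $V_n$ defined on all of $\cl U_n$, and that $f_n$ does not vanish on the boundary $\partial U_n$ computed in $V_n$.

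First I would record that $f_n$ preserves $V_n$: for $x\in V_n$ both $x$ and $P_nF(x)$ lie in $V_n$, so $f_n(x)=x-P_nF(x)\in V_n$, and hence $\restrictionmap{f_n}{V_n}$ is a genuine map between subsets of $V_n\cong\R^n$. Since $U$ is open and bounded in $E$, the slice $U_n=U\cap V_n$ is open and bounded in $V_n$, and $\cl U_n\subset\cl U\subset D_f$ guarantees that $f_n$ is defined and continuous on $\cl U_n$. By Remark~\ref{rem:finite} the boundedness of the domain also places $\restrictionmap{f_n}{V_n}$ in the identity-minus-compact form expected of a finite-dimensional local map, though for the Brouwer degree only continuity on $\cl U_n$ is needed.

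The heart of the argument is the boundary inclusion $\partial U_n\subset\partial U$, where $\partial U_n$ is taken in $V_n$ and $\partial U$ in $E$. I would establish it pointwise: if $x\in\partial U_n$, then $x\in\cl U_n\subset\cl U$, while $x\in V_n$ together with $x\notin U_n=U\cap V_n$ forces $x\notin U$; since $U$ is open, $\partial U=\cl U\setminus U$, so $x\in\partial U$. Granting this inclusion, Lemma~\ref{lem:boundary} applies directly on $\partial U_n$: for every $n\ge N$ and every $x\in\partial U_n\subset\partial U$ we have $\abs{f_n(x)}\ge\epsilon>0$, so $f_n$ is nonvanishing on $\partial U_n$.

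With all three conditions in place, $\deg(\restrictionmap{f_n}{V_n},U_n)$ is defined for each $n\ge N$, which is the assertion. I expect the boundary inclusion $\partial U_n\subset\partial U$ to be the only point requiring genuine care; once it is in hand the corollary is an immediate consequence of Lemma~\ref{lem:boundary}.
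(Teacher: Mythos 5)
Your proof is correct and takes essentially the same route as the paper, whose entire argument is the observation that $\partial U_n\subset\partial U$ combined with Lemma~\ref{lem:boundary}. You have merely filled in the routine details the paper leaves implicit: the pointwise verification of the boundary inclusion and the fact that $f_n$ restricts to a continuous self-map of $V_n$ on $\cl U_n$.
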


The following fact shows that the sequence 
$\{\deg(f_n,U_n)\}_{n\ge N}$ is constant.

\begin{lem}\label{lem:well}
$\deg(f_{n+1},U_{n+1})=\deg(f_n,U_n)$ for $n\ge N$.
\end{lem}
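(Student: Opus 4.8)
The plan is to deduce the equality from two standard properties of the Brouwer degree: homotopy invariance and the reduction (restriction) property. The first observation is that both $f_n$ and $f_{n+1}$ carry $V_{n+1}$ into itself. Indeed, for $x\in V_{n+1}$ we have $P_{n+1}F(x)\in V_{n+1}$ and $P_nF(x)\in V_n\subset V_{n+1}$, so $f_{n+1}(x)=x-P_{n+1}F(x)$ and $f_n(x)=x-P_nF(x)$ both lie in $V_{n+1}$. Thus $f_n$ and $f_{n+1}$ are genuine self-maps of $V_{n+1}$ defined on $\cl U_{n+1}$, and their Brouwer degrees on $U_{n+1}\subset V_{n+1}$ make sense once non-vanishing on $\partial U_{n+1}$ is checked.

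First I would connect $f_{n+1}$ and $f_n$ on $\cl U_{n+1}$ by the straight-line homotopy $H(t,x)=(1-t)f_{n+1}(x)+tf_n(x)=f_{n+1}(x)+t\bigl(f_n(x)-f_{n+1}(x)\bigr)$, which by the previous paragraph takes values in $V_{n+1}$. Since $\partial U_{n+1}\subset\partial U$, Lemma~\ref{lem:boundary} gives $\abs{f_{n+1}(x)}\ge\epsilon$ and Lemma~\ref{lem:N} gives $\abs{f_n(x)-f_{n+1}(x)}<\epsilon$ on $\partial U_{n+1}$. Hence for $x\in\partial U_{n+1}$ and $t\in I$,
\[
\abs{H(t,x)}\ge\abs{f_{n+1}(x)}-t\abs{f_n(x)-f_{n+1}(x)}>0,
\]
because $t\abs{f_n(x)-f_{n+1}(x)}\le\abs{f_n(x)-f_{n+1}(x)}<\epsilon\le\abs{f_{n+1}(x)}$. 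So $H$ is admissible and homotopy invariance yields $\deg(f_{n+1},U_{n+1})=\deg(f_n,U_{n+1})$.

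The second step lowers the dimension by one via the reduction property. On $V_{n+1}=V_n\oplus\Span\{e_{n+1}\}$ we have $f_n=\id-P_nF$ with the perturbation $P_nF$ taking values in the subspace $V_n$; since $0\notin f_n(\partial U_{n+1})$ by the estimate above and $0\notin f_n(\partial U_n)$ by Lemma~\ref{lem:boundary} (using $\partial U_n\subset\partial U$), the reduction property of the Brouwer degree applies and gives $\deg(f_n,U_{n+1})=\deg(f_n,U_{n+1}\cap V_n)$. Because $U_{n+1}\cap V_n=U\cap V_{n+1}\cap V_n=U_n$, chaining the two equalities produces the claim.

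I expect the only delicate points to be essentially bookkeeping: verifying that the straight-line homotopy stays inside the finite-dimensional space $V_{n+1}$ and is zero-free on $\partial U_{n+1}$ (which is exactly where the choice of $N$ and Lemmas~\ref{lem:N} and~\ref{lem:boundary} enter), and invoking the reduction property with the correct orthogonal splitting $V_{n+1}=V_n\oplus\Span\{e_{n+1}\}$ and the image condition $P_nF(\cl U_{n+1})\subset V_n$. No genuinely new difficulty should arise beyond matching these classical finite-dimensional degree properties to the notation at hand.
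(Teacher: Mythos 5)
Your proof is correct, but its second half takes a genuinely different route from the paper's. The first step --- the straight-line homotopy from $f_{n+1}$ to $f_n$ on $U_{n+1}$, made admissible on $\partial U_{n+1}\subset\partial U$ by Lemmas~\ref{lem:N} and \ref{lem:boundary} --- is exactly equality $(1)$ of the paper's proof. For the passage from $\deg(f_n,U_{n+1})$ to $\deg(f_n,U_n)$, however, the paper does not invoke the reduction property as a black box: it first excises down to a product neighbourhood $W_\delta=W\times(-\delta,\delta)$ of $f_n^{-1}(0)$ inside $U_{n+1}$, then runs a second straight-line homotopy from $f_n(x)=x-P_nF(x)$ to the genuine suspension $g_n(x)=x-P_nF(P_nx)$, and finally uses the suspension (product) property of $\deg$ together with excision to arrive at $\deg(f_n,W)=\deg(f_n,U_n)$. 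In effect the paper re-derives, in the special case at hand, the dimension-lowering property that you cite; note that this property is not a formal triviality here, because for $x\in V_{n+1}$ the perturbation $P_nF(x)$ depends on all of $x$ and not only on $P_nx$, so $f_n$ on $U_{n+1}$ is \emph{not} literally a suspension --- that is precisely what the paper's intermediate homotopy repairs. Your appeal to the reduction property is legitimate: its hypotheses ($P_nF(\cl U_{n+1})\subset V_n$ and $0\notin f_n(\partial U_{n+1})$, the latter already given directly by Lemma~\ref{lem:boundary} since $\partial U_{n+1}\subset\partial U$) are verified, and $U_{n+1}\cap V_n=U_n$, so the conclusion follows; this is exactly the classical Leray--Schauder well-definedness argument and shortens the proof. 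What the paper's longer route buys is self-containedness: it needs only homotopy invariance, excision, and the degree of a product map, rather than the full strength of the reduction theorem.
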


\begin{proof}
Since $f_n^{-1}(0)\subset U_n$, there is an open 
subset $W\subset V_n$ such that
\[
f_n^{-1}(0)\subset W \subset\cl W
\subset U_n
\]
and there is $\delta>0$ such that 
$W_\delta:=W\times(-\delta,\delta)\subset U_{n+1}$.
Let $g_n\colon W_\delta\to E$ be given by
$g_n(x)=x-P_nF(P_n x)$ (in other words $g_n$
is a suspension of $\restrictionmap{f_n}{W}$).
By definition, 
$\restrictionmap{g_n}{W}=\restrictionmap{f_n}{W}$
and $g_n^{-1}(0)=f_n^{-1}(0)$.
Let us check the following sequence of equalities.
\begin{multline*}
\deg(f_{n+1},U_{n+1})\stackrel{(1)}{=}
\deg(f_{n},U_{n+1})\stackrel{(2)}{=}
\deg(f_{n},W_\delta)\stackrel{(3)}{=}\\
\deg(g_{n},W_\delta)\stackrel{(4)}{=}
\deg(f_{n},W)\stackrel{(5)}{=}
\deg(f_{n},U_{n})
\end{multline*}
The equalities $(1)$ and $(3)$ can be obtained
using straight-line homotopies,
which are otopies by Lemmas \ref{lem:N} and \ref{lem:boundary}.
In turn $(2)$ and $(5)$ are based on the restriction
property of the degree and, finally,
$(4)$ follows from the fact that
$g_n$ is a suspension of $f_n$ over $W$.
This completes the proof.
\end{proof}

Lemma \ref{lem:well} guarantees that the following definition
does not depend on the choice of admissible $N$. 

\begin{defn}
Define $\Deg f=\Deg(f,U)=\deg(f_N,U_N)$.
\end{defn}

\begin{rem}
Our degree gives the same values as the classical
Leray-Schauder degree, which follows easily
from the comparison of our construction and
the definition and the proof of the well-definedness
of the Leray-Schauder degree. 
However, in the proof of bijectivity of our degree
in the gradient case we use the fact that 
finite-dimensional approximations of a gradient map
appearing in our construction are gradient,
which is not guaranteed by the original 
Leray-Schauder construction.
\end{rem}

\subsection{Correctness}
Let us note that for the above construction
we have chosen a neighbourhood $U$ of $f^{-1}(0)$ and
an orthonormal basis of $E$.
Now we are going to prove that our definition of $\Deg f$ 
does not depend on the choice of these both elements.

\begin{prop}
Let $W$ and $U$ be open bounded such that
\[
f^{-1}(0)\subset W\subset U\subset\cl U\subset D_f.
\]
Then $\Deg(f,W)=\Deg(f,U)$.
\end{prop}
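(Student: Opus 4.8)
The plan is to reduce the claim to a single application of the restriction property of the Brouwer degree, after arranging a common approximation index that serves both neighbourhoods. First I would record that, by Lemma~\ref{lem:well}, for each admissible neighbourhood the integer $\Deg(f,\cdot)$ equals $\deg(f_n,\cdot_n)$ for \emph{every} sufficiently large $n$; hence it suffices to exhibit a single $n$ for which simultaneously $\Deg(f,U)=\deg(f_n,U_n)$, $\Deg(f,W)=\deg(f_n,W_n)$, and $\deg(f_n,U_n)=\deg(f_n,W_n)$.

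The heart of the argument is the last equality. Since $f^{-1}(0)\subset W$, the set $X=\cl U\setminus W$ is closed and bounded in $E$ and disjoint from $f^{-1}(0)$. Applying Lemma~\ref{lem:zero} to $X$ yields $\epsilon>0$ with $\abs{f(x)}\ge 2\epsilon$ for all $x\in X$. Repeating the approximation argument of Lemma~\ref{lem:N} (based on Proposition~\ref{prop:compact} and the relative compactness of $F(\cl U)$), I would choose $N$ so large that $\abs{f(x)-f_n(x)}<\epsilon$ for all $x\in\cl U$ and all $n\ge N$, taking $N$ also large enough that both defined degrees are computed by the $n$-th approximations. For such $n$ it then follows that $\abs{f_n(x)}\ge\epsilon$ on $X$.

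It remains to transfer this non-vanishing to the finite-dimensional sets. A short set-theoretic check shows $\cl U_n\setminus W_n\subset\cl U\setminus W=X$: a point of $\cl U_n$ lies in $V_n$ and in $\cl U$, and if it is not in $W_n=W\cap V_n$ then, being already in $V_n$, it cannot lie in $W$ at all. Consequently $f_n$ does not vanish on $\cl U_n\setminus W_n$, so $f_n^{-1}(0)\cap\cl U_n\subset W_n$, and the restriction property of the Brouwer degree gives $\deg(f_n,U_n)=\deg(f_n,W_n)$. Combining the three equalities yields $\Deg(f,W)=\Deg(f,U)$.

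The step I expect to require the most care is verifying the containment $\cl U_n\setminus W_n\subset X$ together with the fact that the single estimate $\abs{f-f_n}<\epsilon$ on $\cl U$ simultaneously guarantees the well-definedness of both $\deg(f_n,U_n)$ and $\deg(f_n,W_n)$ and the excision non-vanishing; once the common $N$ is fixed correctly, the remainder is a routine invocation of the restriction property.
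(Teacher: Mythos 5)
Your proof is correct and follows essentially the same route as the paper's: approximate $f$ by $f_n$ on $\cl U$, use the non-vanishing of $f$ on $\cl U\setminus W$ to conclude $f_n\neq 0$ on $\cl U_n\setminus W_n$, and invoke the restriction (excision) property of the Brouwer degree. The only difference is that you spell out what the paper leaves implicit — the application of Lemma~\ref{lem:zero} to $X=\cl U\setminus W$, the inclusion $\cl U_n\setminus W_n\subset\cl U\setminus W$, and the choice of a common $n$ serving both neighbourhoods — all of which are exactly the right details.
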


\begin{proof}
By Lemma~\ref{lem:N}, we have $\abs{f(x)-f_n(x)}<\epsilon$
for $x\in\cl U$ and, consequently, $f_n(x)\neq0$ for
$x\in\cl U_n\setminus W_n\subset\cl U\setminus W$
for sufficiently large $n$. Hence
\[
\Deg(f,W)=\deg(f_n,W_n)=
\deg(f_n,U_n)=\Deg(f,U).\qedhere
\]
\end{proof}

\begin{cor}
Let $U$ and $U'$ be open bounded subsets of $D_f$ such that
\[
f^{-1}(0)\subset U\cap U'\subset
\cl(U\cap U')\subset\cl(U\cup U')\subset D_f.
\]
Then
\[
\Deg(f,U)=\Deg(f,U\cap U')=\Deg(f,U').
\]
\end{cor}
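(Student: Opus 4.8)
The plan is to deduce this directly from the preceding Proposition by applying it twice: once to the pair $(U\cap U',\,U)$ and once to the pair $(U\cap U',\,U')$. The underlying idea is that $U\cap U'$ furnishes a single open bounded neighbourhood of $f^{-1}(0)$ sitting inside both $U$ and $U'$, so that each of the values $\Deg(f,U)$ and $\Deg(f,U')$ can be compared with the common value $\Deg(f,U\cap U')$ and hence with one another.

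First I would check that $U\cap U'$ qualifies as an inner set in the sense required by the Proposition. It is open and bounded, being the intersection of two open bounded sets, and by hypothesis $f^{-1}(0)\subset U\cap U'$; moreover $U\cap U'\subset U$ trivially. The only remaining point needed to invoke the Proposition for the pair $(U\cap U',\,U)$ is the closure condition $\cl U\subset D_f$. This is extracted from the stated chain of inclusions: since $U\subset U\cup U'$ we have $\cl U\subset\cl(U\cup U')\subset D_f$. The Proposition then yields $\Deg(f,U\cap U')=\Deg(f,U)$. Interchanging the roles of $U$ and $U'$ and using $\cl U'\subset\cl(U\cup U')\subset D_f$ in the same way gives $\Deg(f,U\cap U')=\Deg(f,U')$. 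Chaining these two equalities completes the argument.

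There is essentially no substantial obstacle here, since the statement is a formal corollary of the Proposition. The only point demanding a little care is the verification of the two separate closure containments $\cl U\subset D_f$ and $\cl U'\subset D_f$: the hypothesis is phrased through the single containment $\cl(U\cup U')\subset D_f$ rather than asserting the two containments individually, so one must first observe that both $\cl U$ and $\cl U'$ are contained in $\cl(U\cup U')$ before the Proposition can be applied.
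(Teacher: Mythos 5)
Your proposal is correct and follows exactly the route the paper intends: the corollary is stated without proof precisely because it follows by applying the preceding Proposition twice, once to the pair $(U\cap U',U)$ and once to $(U\cap U',U')$, and chaining the equalities. Your extra care in extracting $\cl U\subset\cl(U\cup U')\subset D_f$ and $\cl U'\subset\cl(U\cup U')\subset D_f$ from the stated inclusion chain is exactly the small verification needed to invoke the Proposition legitimately.
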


In this way we have proved that $\Deg f$ does not depend
on the choice of $U$.

In the remainder of this subsection 
we show that $\deg f$ does not depend
on the choice of an orthonormal basis in $E$.
The reasoning requires some additional notation.
Let $V$ be a finite dimensional linear subspace of $E$.
Set
\begin{itemize}
\item $U_V=U\cap V$,
\item $P_V\colon E\to V$ --- an orthogonal projection,
\item $f_V(x)=x-P_VF(x)$.
\end{itemize}
Analogously to Corollary \ref{cor:well} and Lemma \ref{lem:well}
one can prove the following result.

\begin{lem}\label{lem:stab}
If $V$ is a finite dimensional linear subspace of $E$
such that $V_N\subset V$ then $\deg(f_V,U_V)$ is well defined
and $\deg(f_N,U_N)=\deg(f_V,U_V)$.
\end{lem}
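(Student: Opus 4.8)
The plan is to follow the template of Lemma~\ref{lem:well}, replacing the single-dimension step $V_n\hookrightarrow V_{n+1}$ by the inclusion $V_N\subset V$ and the one-dimensional suspension by a suspension over the orthogonal complement $V':=V\cap V_N^{\perp}$, so that $V=V_N\oplus V'$.

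First I would settle well-definedness. Since $f_V(x)=x-P_VF(x)$ and both $x$ and $P_VF(x)$ lie in $V$ whenever $x\in V$, the map $f_V$ is a self-map of the finite-dimensional space $V$, so $\deg(f_V,U_V)$ makes sense once $f_V$ does not vanish on $\partial U_V$. The key estimate comes from the hypothesis $V_N\subset V$: because $P_Nx\in V_N\subset V$ and $P_V$ realizes the best approximation in $V$, we have $\abs{y-P_Vy}\le\abs{y-P_Ny}$ for every $y\in E$. Applying this to $y=F(x)$ and invoking Lemma~\ref{lem:N} gives
\[
\abs{f(x)-f_V(x)}=\abs{F(x)-P_VF(x)}\le\abs{F(x)-P_NF(x)}<\epsilon
\]
for all $x\in\cl U$. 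Together with Corollary~\ref{cor:epsilon} this yields $\abs{f_V(x)}\ge\epsilon$ on $\partial U$, hence on $\partial U_V\subset\partial U$, so $\deg(f_V,U_V)$ is well defined. The same estimate holds simultaneously for $P_N$ and $P_V$, which is exactly what will license homotoping between them.

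For the equality I would reproduce the five-term chain of Lemma~\ref{lem:well}. Choose an open $W\subset V_N$ with $f_N^{-1}(0)\subset W\subset\cl W\subset U_N$, put $B_\delta:=\{z\in V'\mid\abs{z}<\delta\}$ and pick $\delta>0$ with $W_\delta:=W\times B_\delta\subset U_V$, and let $g_N(x)=x-P_NF(P_Nx)$ be the suspension of $\restrictionmap{f_N}{W}$, so that in the splitting $V=V_N\oplus V'$ one has $g_N(y,z)=(f_N(y),z)$ and $g_N^{-1}(0)=f_N^{-1}(0)$. Then I would verify
\begin{multline*}
\deg(f_V,U_V)\stackrel{(1)}{=}\deg(f_N,U_V)\stackrel{(2)}{=}\deg(f_N,W_\delta)\stackrel{(3)}{=}\\
\deg(g_N,W_\delta)\stackrel{(4)}{=}\deg(f_N,W)\stackrel{(5)}{=}\deg(f_N,U_N).
\end{multline*}
Here $(1)$ uses the straight-line homotopy $x\mapsto x-[(1-s)P_VF(x)+sP_NF(x)]$, which by the estimate above stays $\epsilon$-close to $f$ on $\partial U$ and is therefore admissible; $(2)$ and $(5)$ are the restriction property of $\deg$, valid because every zero of $f_N$ in $V$ forces $x=P_NF(x)\in V_N$ and thus lands in $f_N^{-1}(0)\subset W$; and $(3)$ is again a straight-line homotopy.

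The step I expect to be the real obstacle is $(3)$ together with $(4)$, i.e.\ the suspension over the now higher-dimensional fibre $V'$ rather than a single coordinate. For $(3)$ one checks that along the homotopy $x\mapsto x-P_N[(1-s)F(x)+sF(P_Nx)]$ the $V'$-component of the value equals the $V'$-component $z$ of $x$ (since the subtracted term lies in $V_N$); hence a zero forces $z=0$ and then reduces to $f_N(y)=0$ with $y\in\partial W$, which is excluded, and this replaces the one-dimensional boundary analysis of Lemma~\ref{lem:well}. For $(4)$ one invokes the suspension (product) formula for the Brouwer degree, $\deg(g_N,W\times B_\delta)=\deg(f_N,W)\cdot\deg(\id_{V'},B_\delta)=\deg(f_N,W)$. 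Since every link is independent of which admissible $V\supset V_N$ is chosen, the common value coincides with $\deg(f_N,U_N)$, which proves the claim.
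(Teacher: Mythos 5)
Your proposal is correct and takes essentially the same approach as the paper: the paper's "proof" of Lemma~\ref{lem:stab} is the one-line remark that it follows analogously to Corollary~\ref{cor:well} and Lemma~\ref{lem:well}, and your write-up carries out exactly that analogy (non-vanishing of $f_V$ on $\partial U$, then the five-term restriction/homotopy/suspension chain, with the suspension taken over $V'=V\cap V_N^{\bot}$ and the product formula in place of the one-coordinate suspension). The one ingredient you supply beyond a literal transcription --- the best-approximation inequality $\abs{F(x)-P_VF(x)}\le\abs{F(x)-P_NF(x)}$, which substitutes for Lemma~\ref{lem:N} when passing from the fixed flag $\{V_n\}$ to an arbitrary $V\supset V_N$ --- is precisely what is needed to make the analogy rigorous.
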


\begin{cor}
$\Deg f$ does not depend
on the choice of an orthonormal basis in $E$.
\end{cor}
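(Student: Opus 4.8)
The plan is to exploit the stabilization result of Lemma~\ref{lem:stab}, together with the observation that the finite-dimensional degree $\deg(f_V,U_V)$ depends only on the subspace $V$ and not on any basis: indeed $P_V$ is the orthogonal projection onto $V$, so both $U_V=U\cap V$ and $f_V(x)=x-P_VF(x)$ are intrinsic to $V$ alone. Since we have already shown that $\Deg f$ is independent of the chosen neighbourhood, I would first fix one open bounded $U$ with $f^{-1}(0)\subset U\subset\cl U\subset D_f$ and one $\epsilon>0$ as in Corollary~\ref{cor:epsilon}; these data are manifestly basis-free and are used throughout the argument.

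Next I would take two orthonormal bases $\{e_i\}$ and $\{e_i'\}$, giving rise to the two filtrations $V_n=\Span\{e_1,\dots,e_n\}$ and $V_n'=\Span\{e_1',\dots,e_n'\}$. Applying Lemma~\ref{lem:N} to each filtration produces indices $N$ and $N'$ for which the respective approximations satisfy $\abs{f-f_n}<\epsilon$ on $\cl U$, so that $\Deg f=\deg(f_N,U_N)$ computed in the first basis and the analogous quantity $\deg(f_{N'}',U_{N'}')$ computed in the second basis are both well defined. The key step is then to pass to a single subspace large enough to dominate both filtrations: set $V=V_N+V_{N'}'$, which is a finite-dimensional linear subspace of $E$ containing both $V_N$ and $V_{N'}'$.

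Since $V_N\subset V$, Lemma~\ref{lem:stab} applied to the first basis gives $\deg(f_N,U_N)=\deg(f_V,U_V)$, and since $V_{N'}'\subset V$, the same lemma applied to the second basis gives $\deg(f_{N'}',U_{N'}')=\deg(f_V,U_V)$. Because $\deg(f_V,U_V)$ is literally the same object in both equalities --- it is built from the intrinsic data $P_V$, $U_V$, $f_V$ --- I would conclude that the two basis-dependent values of $\Deg f$ coincide, which is exactly the assertion.

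The part that deserves the most care is verifying that the hypotheses of Lemma~\ref{lem:stab} are genuinely available for a subspace assembled from a mixture of the two bases. Concretely, one must check that enlarging the approximating subspace can only improve the estimate, that is, $\abs{x-P_Vx}\le\abs{x-P_Wx}$ whenever $W\subset V$, which is immediate from the fact that $P_Vx$ is the nearest point of $V$ to $x$ and $P_Wx\in W\subset V$. This monotonicity is what guarantees $\abs{f-f_V}<\epsilon$ on $\cl U$, hence the well-definedness of $\deg(f_V,U_V)$ feeding both applications of the lemma. Once it is in hand the argument reduces to the purely formal two-line comparison above, so I do not anticipate any further obstacle.
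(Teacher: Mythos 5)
Your proposal is correct and is essentially the paper's own argument: both fix $N$ and $N'$ via Lemma~\ref{lem:N}, set $V=V_N+V'_{N'}$, and apply Lemma~\ref{lem:stab} twice to identify both basis-dependent values with the intrinsic quantity $\deg(f_V,U_V)$. Your added remarks on the basis-independence of $f_V$, $U_V$ and on the monotonicity $\abs{x-P_Vx}\le\abs{x-P_Wx}$ for $W\subset V$ merely make explicit what the paper leaves implicit in Lemma~\ref{lem:stab}.
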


\begin{proof}
Let  $\{e_i\}$ and $\{e'_i\}$ be two orthonormal bases in $E$.
We will use analogous notation for them both
writing prime where needed. For example,
$V_n=\Span\{e_1,\dotsc,e_n\}$ and $V'_n=\Span\{e'_1,\dotsc,e'_n\}$.
Let us choose $N$ and $N'$ for $\{e_i\}$ and $\{e'_i\}$
respectively as in Lemma \ref{lem:N}. Put $V=V_N+V'_{N'}$.
By Lemma \ref{lem:stab},
\[
\deg(f_N,U_N)=\deg(f_V,U_V)=\deg(f'_{N'},U'_{N'}),
\]
which is our assertion.
\end{proof}

\subsection{Otopy invariance of degree}

Let the map $h\colon\Lambda\subset I\times\Omega\to E$
given by $h(t,x)=x-F(t,x)$ be an otopy.
We introduce the following notation:
\begin{align*}
\Lambda^t=&\{x\in\Omega\mid(t,x)\in\Lambda\},
&h^t&\colon\Lambda^t\to E,
&h^t(x)&=h(t,x),\\
\Lambda_n=&\Lambda\cap(I\times V_n),
&h_n&\colon\Lambda\to V_n,
&h_n(t,x)&=x-P_nF(t,x),\\
\Lambda_n^t=&\Lambda^t\cap V_n,
&h_n^t&\colon\Lambda^t\to V_n,
&h_n^t(x)&=h_n(t,x).
\end{align*}
Note that for the needs of this subsection
the time parameter $t$ of otopy is
a superscript, not a subscript.

\begin{prop}[otopy invariance]
If $h\colon\Lambda\subset I\times\Omega\to E$ 
is an otopy then
\[
\Deg(h^0,\Lambda^0)=\Deg(h^1,\Lambda^1).
\]
\end{prop}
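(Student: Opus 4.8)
The plan is to reduce the infinite-dimensional otopy invariance to the classical finite-dimensional degree by the same approximation scheme used to define $\Deg$, applied uniformly in the time parameter. The key point is that an otopy $h$ restricts at $t=0$ and $t=1$ to local maps $h^0$ and $h^1$, and I want to show their degrees agree by exhibiting, for large $n$, a single finite-dimensional homotopy connecting the approximations $h_n^0$ and $h_n^1$ that is admissible (nonvanishing on the relevant boundary).

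First I would fix an open bounded set $\Lambda'$ with $h^{-1}(0)\subset\Lambda'\subset\cl{\Lambda'}\subset\Lambda$, which exists by compactness of $h^{-1}(0)$ exactly as in the remark following Lemma~\ref{lem:zero}. Applying Lemma~\ref{lem:zero} to the map $h$ on the closed bounded set $\partial\Lambda'$ (on which $h$ does not vanish) yields an $\epsilon>0$ with $\abs{h(t,x)}\ge 2\epsilon$ there. Next, since $F(\cl{\Lambda'})$ is relatively compact, Proposition~\ref{prop:compact} gives an $N$ so that for all $n\ge N$ and all $(t,x)\in\cl{\Lambda'}$ we have $\abs{h(t,x)-h_n(t,x)}=\abs{F(t,x)-P_nF(t,x)}<\epsilon$; these are the exact analogues of Corollary~\ref{cor:epsilon} and Lemma~\ref{lem:N} with the time variable carried along. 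Consequently $\abs{h_n(t,x)}\ge\epsilon$ for $(t,x)\in\partial\Lambda'$ and $n\ge N$, so each slice map $h_n$ is a nonvanishing finite-dimensional homotopy on the relevant boundary.

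The heart of the argument is then the observation that for fixed $n\ge N$, the map $(t,x)\mapsto h_n(t,x)=x-P_nF(t,x)$ is a continuous homotopy in $V_n$ whose zero set sits inside $\Lambda'_n$ and which is nonvanishing on the boundary of each slice. By the homotopy invariance of the classical Brouwer degree $\deg$, the quantity $\deg(h_n^t,(\Lambda')_n^t)$ is independent of $t$; in particular $\deg(h_n^0,(\Lambda')_n^0)=\deg(h_n^1,(\Lambda')_n^1)$. By the definition of $\Deg$ together with the established correctness (independence of the choice of neighbourhood), the left side equals $\Deg(h^0,\Lambda^0)$ and the right side equals $\Deg(h^1,\Lambda^1)$, which is the claim.

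The main obstacle I anticipate is the bookkeeping around the slicewise domains: the set $(\Lambda')_n^t$ can vary with $t$ and need not be connected or have nice boundary, so I must be careful that $h_n$ is genuinely admissible as a homotopy over the open set $\Lambda'_n\subset I\times V_n$ rather than merely slicewise. The clean way to handle this is to invoke homotopy invariance of $\deg$ in the form that applies to a compact family of zeros inside an open set of $I\times V_n$ with no zeros on $\partial\Lambda'$, rather than trying to track individual slices; the uniform bound $\abs{h_n}\ge\epsilon$ on $\partial\Lambda'$ for $n\ge N$ is precisely what makes this legitimate, so once the approximation estimates are in place the degree-theoretic step is routine.
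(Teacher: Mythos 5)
Your proposal is correct and follows essentially the same route as the paper: choose a bounded open $W$ (your $\Lambda'$) with $h^{-1}(0)\subset W\subset\cl W\subset\Lambda$, get a uniform bound $\abs{h}\ge2\epsilon$ on $\partial W$ and a uniform approximation $\abs{h-h_n}<\epsilon$ for $n\ge N$, and then apply the generalized (variable-domain) homotopy invariance of the Brouwer degree — i.e.\ the fact that $\restrictionmap{h_n}{W_n}$ is a finite-dimensional otopy — together with the definition and correctness of $\Deg$ at the two time slices. The subtlety you flag about slicewise domains is exactly the point the paper handles by viewing $\restrictionmap{h_n}{W_n}$ as a finite-dimensional otopy rather than a slicewise homotopy.
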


\begin{proof}
Since $h^{-1}(0)$ is compact,
there is an open bounded set
$W\subset I\times E$ such that
\begin{equation}\label{eqn:otopy1}
h^{-1}(0)\subset W\subset\cl W\subset\Lambda.
\end{equation}
Hence for $i=0,1$ we have
\begin{equation}\label{eqn:otopy2}
(h^i)(0)^{-1}\subset W^i\subset\cl W^i\subset\Lambda^i,
\end{equation}
where $W^i=\{x\in\Omega\mid(i,x)\in W\}$.
Analogously, as in Lemma~\ref{lem:zero}, from \eqref{eqn:otopy1}
there is $\epsilon>0$ such that $\abs{h(z)}\ge2\epsilon$
for $z\in\partial W$ and, as in Lemma~\ref{lem:N}, there is $N$
such that $\abs{h(z)-h_n(z)}<\epsilon$ for $n\ge N$
and $z\in\partial W$. Hence $\abs{h_n(z)}\ge\epsilon$ for
$z\in\partial W_n\subset\partial W$ and, in consequence,
\begin{itemize}
\item $\Deg(h^i,\Lambda^i)=\deg(h_n^i,W_n^i)$ for $i=0,1$,
\item $\restrictionmap{h_n}{W_n}$ is a finite-dimensional otopy,
\end{itemize}
which gives 
\[
\Deg(h^0,\Lambda^0)=\deg(h_n^0,W_n^0)=\deg(h_n^1,W_n^1)
=\Deg(h^1,\Lambda^1).\qedhere
\]
\end{proof}

\begin{rem}
Since our degree is otopy invariant,
it can be defined on the set of otopy class i.e. 
$\Deg\colon\cC[\Omega]\to\Z$. Moreover,
any gradient otopy class (as a set of functions)
is contained in a usual otopy class,
and hence the degree makes sense as a function
$\Deg\colon\G[\Omega]\to\Z$.
Without ambiguity we will use the symbol $\Deg$
in all the above cases.
\end{rem}

\section{Main results} 
\label{sec:main}

Let us formulate the main results of our paper.
\begin{thma}
The functions $\Deg\colon\cC[\Omega]\to\Z$ 
and $\Deg\colon\G[\Omega]\to\Z$
are bijections.
\end{thma}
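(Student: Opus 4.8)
The plan is to prove, for both $\cC[\Omega]$ and $\G[\Omega]$, that $\Deg$ is surjective and injective, in each case reducing the infinite-dimensional statement to the already-known finite-dimensional classification of \cite{BP1,BP3}, where the Brouwer degree is a complete invariant of the respective (gradient) otopy classes in a finite-dimensional space. Throughout I use the restriction property to replace each local map by its restriction to a small bounded neighbourhood of its zero set, so that all domains are bounded and Remark~\ref{rem:finite} applies.

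For surjectivity I would exhibit, for each $k\in\Z$, a \emph{gradient} local map of degree $k$; since every gradient local map is in particular a local map, this settles surjectivity of both functions at once. Fix $n$ and a finite-dimensional gradient local map $\nabla\psi$ defined on an open bounded subset of $V_n$, with Brouwer degree $k$ and with zero set chosen inside $\Omega\cap V_n$ (such $\psi$ exist by the finite-dimensional theory). Define $\varphi(x)=\psi(P_nx)+\tfrac12\abs{x-P_nx}^2$ and put $f=\nabla\varphi$. A direct computation gives $f(x)=x-\bigl(P_nx-\nabla\psi(P_nx)\bigr)$, so $f=\id-F$ with $F$ of finite-dimensional range, hence compact, and $f^{-1}(0)=(\nabla\psi)^{-1}(0)\subset V_n$ is compact; thus $f\in\G(\Omega)$. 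Since $\restrictionmap{f}{V_n}=\nabla\psi$, the construction of $\Deg$ yields $\Deg f=k$.

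For injectivity let $f,g$ be (gradient) local maps with $\Deg f=\Deg g=k$. By Lemma~\ref{lem:stab} I may pass to a common finite-dimensional subspace $V=V_m$ large enough for both maps. The key observation, already implicit in the proof of Lemma~\ref{lem:well}, is that the suspension $g_m(x)=x-P_mF(P_mx)$ satisfies $g_m(x)=\phi(P_mx)+(\id-P_m)x$, where $\phi=\restrictionmap{g_m}{V}=\restrictionmap{f_m}{V}$ is a finite-dimensional local map with $\deg\phi=\Deg f=k$; by Remark~\ref{rem:finite} the boundedness of its domain guarantees $\phi$ is a local map in the sense of \cite{BP1,BP3}. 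The straight-line homotopies joining $f$ to $f_m$ and $f_m$ to $g_m$ are otopies by Lemmas~\ref{lem:N}, \ref{lem:boundary} and~\ref{lem:bound}, so $f$ is otopic to the suspension of $\phi$; likewise $g$ is otopic to the suspension of a finite-dimensional $\phi'$ with $\deg\phi'=k$. The finite-dimensional classification provides an otopy $\Phi$ between $\phi$ and $\phi'$ in $V$, and its suspension $\tilde\Phi(t,x)=\Phi(t,P_mx)+(\id-P_m)x$ is again an otopy, its zero set being $\Phi^{-1}(0)$ and hence compact. Chaining these otopies shows $f$ and $g$ are otopic, proving injectivity of $\Deg\colon\cC[\Omega]\to\Z$.

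The gradient case is where the real difficulty lies, exactly as flagged in the remark following the definition of $\Deg$: the intermediate map $f_m(x)=x-P_mF(x)$ need not be a gradient, so I cannot route a gradient otopy through it. Instead I would work at the level of potentials, approximating $\varphi$ (with $f=\nabla\varphi$) by $\varphi_m(x)=\restrictionmap{\varphi}{V}(P_mx)+\tfrac12\abs{x-P_mx}^2$, whose gradient $\nabla\varphi_m$ is precisely the suspension of the finite-dimensional gradient map $\nabla(\restrictionmap{\varphi}{V})=\phi$, and then taking the straight-line homotopy of potentials $(1-s)\varphi+s\varphi_m$, whose gradient $(1-s)f+s\,\nabla\varphi_m$ is automatically a gradient homotopy. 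The main obstacle is to verify that this is a gradient \emph{otopy}, i.e.\ that its gradient stays bounded away from $0$ on the boundary of a suitable bounded neighbourhood of $f^{-1}(0)$; because a bounded subset of $E$ is not relatively compact, the required uniform estimate on $\abs{f-\nabla\varphi_m}$ does not follow directly from Proposition~\ref{prop:compact} and must be extracted from the compactness of $F$ through the technical facts of Appendix~\ref{sec:appendixA}. Once the gradient otopy $f\simeq\nabla\varphi_m$ is established, the argument closes as before: the finite-dimensional gradient classification of \cite{BP1,BP3} connects the two finite-dimensional gradient maps, and suspending a gradient otopy (via the analogous potential $\eta(t,P_mx)+\tfrac12\abs{x-P_mx}^2$) is again a gradient otopy, yielding injectivity of $\Deg\colon\G[\Omega]\to\Z$.
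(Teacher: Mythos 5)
Your overall strategy --- replace each map by a suspension of a finite-dimensional approximation, invoke the finite-dimensional classification, and suspend the resulting otopy back --- is indeed the paper's strategy, and your surjectivity argument (the potential $\psi(P_nx)+\tfrac12\abs{x-P_nx}^2$, whose gradient is the suspension $\Sigma(\nabla\psi)$) is exactly the paper's. But your injectivity argument has a genuine missing idea: you apply the finite-dimensional classification to produce an otopy $\Phi$ between $\phi$ and $\phi'$ ``in $V=V_m$'', ignoring that this otopy must live in $\Omega\cap V_m$, which need not be connected even though $\Omega$ is. If the zero sets of $\phi$ and $\phi'$ lie in different components of $\Omega\cap V_m$, equal Brouwer degree does \emph{not} make them otopic there: zeros cannot pass between components along an otopy (restricting an otopy to one component and using otopy invariance of the degree gives a contradiction), so the degree is a separate invariant on each component. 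The paper repairs exactly this with Lemma~\ref{lem:aconnected} (a compact subset of $\Omega_n$ lies in a single component of $\Omega_m$ for $m$ large), followed by a lift of both maps to that component; your proposal contains no substitute for this step.

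The second gap concerns the otopy from $f$ to the suspension $\Sigma f_m(x)=x-P_mF(P_mx)$. In the continuous case you justify the straight-line homotopy $f_m\to\Sigma f_m$ ``by Lemmas~\ref{lem:N}, \ref{lem:boundary} and~\ref{lem:bound}'', but those lemmas only control $\abs{F(x)-P_mF(x)}$; here one needs to compare $F(x)$ with $F(P_mx)$ uniformly over $\partial U$, which does not follow, since $\cl U$ is not compact and $F$ need not be uniformly continuous on it (Lemma~\ref{lem:bound} presupposes the nonvanishing, it does not prove it). You notice precisely this obstacle later, in the gradient case, calling it ``the main obstacle'', but then defer it to ``the technical facts of Appendix~\ref{sec:appendixA}'' --- which do not contain it. The paper proves the nonvanishing inline, in Step 1, by a subsequence argument: if $h_{n_k}(t_k,x_k)=0$ with $x_k\in\partial U$, compactness of $F$ and of $I$ force $x_k\to x_0\in\partial U$ with $f(x_0)=0$, a contradiction. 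What Appendix~\ref{sec:appendixA} actually supplies are the other two pieces of infrastructure you skip: Lemma~\ref{lem:acompact}, guaranteeing $P_m(\cl U)\subset D_f$ so that $\Sigma f_m$ is even defined on $\cl U$, and Lemma~\ref{lem:aotopy}, which needs the finite-dimensional otopy to be \emph{bounded} (the paper cites \cite[Rem.~2.3]{B} for this) so that its suspension has the form identity-minus-compact --- compactness of the zero set of $\tilde\Phi$, which is all you check, is not enough. Finally, note that your perceived asymmetry between the two cases is illusory: the paper never routes through $f_m$ at all, but uses the straight-line homotopy $f\to\Sigma f_n$ directly, and since $\Sigma f_n=\nabla\bigl(\varphi\circ P_n+\tfrac12\abs{x-P_nx}^2\bigr)$ is automatically a gradient, the gradient case then follows from the continuous one almost for free; the hard analytic and topological work (the subsequence estimate, the connectedness lemma, the domain and boundedness control) is identical in both cases.
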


It is obvious that the inclusion $\G(\Omega)\hookrightarrow\cC(\Omega)$
induces a well-defined function $\iota\colon\G[\Omega]\to\cC[\Omega]$.
The next result follows immediately from Theorem~A and 
the commutativity of the diagram
\begin{equation*}
\xymatrix{
\G[\Omega]\ar[rd]_\Deg\ar[rr]^\iota & &
\cC[\Omega]\ar[ld]^\Deg\\
&\Z.}
\end{equation*}

\begin{thmb}
The function $\iota\colon\G[\Omega]\to\cC[\Omega]$ is bijective.
\end{thmb}

\begin{rem}
In other words, there is no better invariant than
the Leray-Schauder degree that distinguishes
between two gradient local maps which are not gradient otopic.
\end{rem}

\section{Proof of Theorem A} 
\label{sec:proof}

\subsection{Injectivity of \texorpdfstring{$\Deg\colon\cC[\Omega]\to\Z$}{Deg}}\label{subsec:inj}
Let $f\colon D_f\to E$ and  $g\colon D_g\to E$ be local maps
such that $\Deg f=\Deg g$. We show that $f$ and $g$ are otopic.
The proof of that will be divided into two steps.
In the first step we show that $f$ is otopic to the suspension
of its finite dimensional approximation and in the second step
that suspensions of approximations for $f$ and $g$ 
are otopic one to another. We start with the observation 
that there exists an open bounded $U\subset E$ such that
\begin{itemize}
\item $f^{-1}(0)\subset U\subset\cl U\subset D_f$,
\item there is $N$ such that $P_n(\cl U)\subset D_f$
for all $n\ge N$.
\end{itemize}
The proof of this observation will be postponed 
to Appendix \ref{sec:appendixA}
(see Lemma \ref{lem:acompact}).

\vspace{1mm}\noindent\textbf{Step 1.}
For $n\ge N$ let $\Sigma f_n\colon\cl U\cup P_n^{-1}(U_n)\to E$
be given by $\Sigma f_n(x)=x-P_nF(P_nx)$.
Note that $\restrictionmap{\Sigma f_n}{P_n^{-1}(U_n)}$
is a suspension of $\restrictionmap{f_n}{U_n}$
(see Section \ref{sec:deg}).
We prove the following sequence of
otopy relations  for $n$ large enough:
\begin{equation}\label{eq:sim}
f\stackrel{(1)}{\sim}
\restrictionmap{f}{U}\stackrel{(2)}{\sim}
\restrictionmap{\Sigma f_n}{U}\stackrel{(3)}{\sim}
\restrictionmap{\Sigma f_n}{U\cup(P_n^{-1}(U_n)\cap\Omega)}
\stackrel{(4)}{\sim}
\restrictionmap{\Sigma f_n}{P_n^{-1}(U_n)\cap\Omega}.
\end{equation}
The sets appearing in \eqref{eq:sim} 
are shown in Figure \ref{fig:sets}.
First observe that all the maps in the above sequence
are local, because $(\Sigma f_n)^{-1}(0)\Subset U_n$ 
from Lemma \ref{lem:boundary}.
The relations $(1)$, $(3)$ and $(4)$ 
follow immediately from the restriction property.
To obtain $(2)$ let us consider the straight-line homotopy
$h_n\colon I\times\cl U\to E$ given by
$h_n(t,x)=(1-t)f(x)+t\Sigma f_n(x)$.
We show that there is $M\ge N$ such that $h_n(t,x)\neq0$
for $t\in I$, $x\in\partial U$ and $n\ge M$.
Thus $\restrictionmap{h_n}{I\times U}$
is an otopy, which proves the relation $(2)$.
On the contrary, suppose that there is an increasing
subsequence $\{n_k\}$ of natural numbers ($n_k\ge N$)
and sequences $\{t_k\}\subset I$ and $\{x_k\}\subset\partial U$
such that $h_{n_k}(t_k,x_k)=0$ i.e.\
\[
x_k=F(x_k)+t_k(P_{n_k}F(P_{n_k}x_k)-F(x_k)).
\]
By compactness of $F$ and $I$, we can assume that sequences
$t_k$, $F(x_k)$ and $F(P_{n_k}x_k)$ are convergent,
so $\{x_k\}$ is also convergent to some point $x_0\in\partial U$.
Since $x_k\to x_0$ implies $P_{n_k}x_k\to x_0$,
we obtain $f(x_0)=x_0-F(x_0)=0$, which contradicts
the fact that $f$ does not vanish on~$\partial U$.
\begin{figure}[ht]
\centering
\includegraphics[scale=1.4,trim= 55mm 205mm 45mm 45mm]{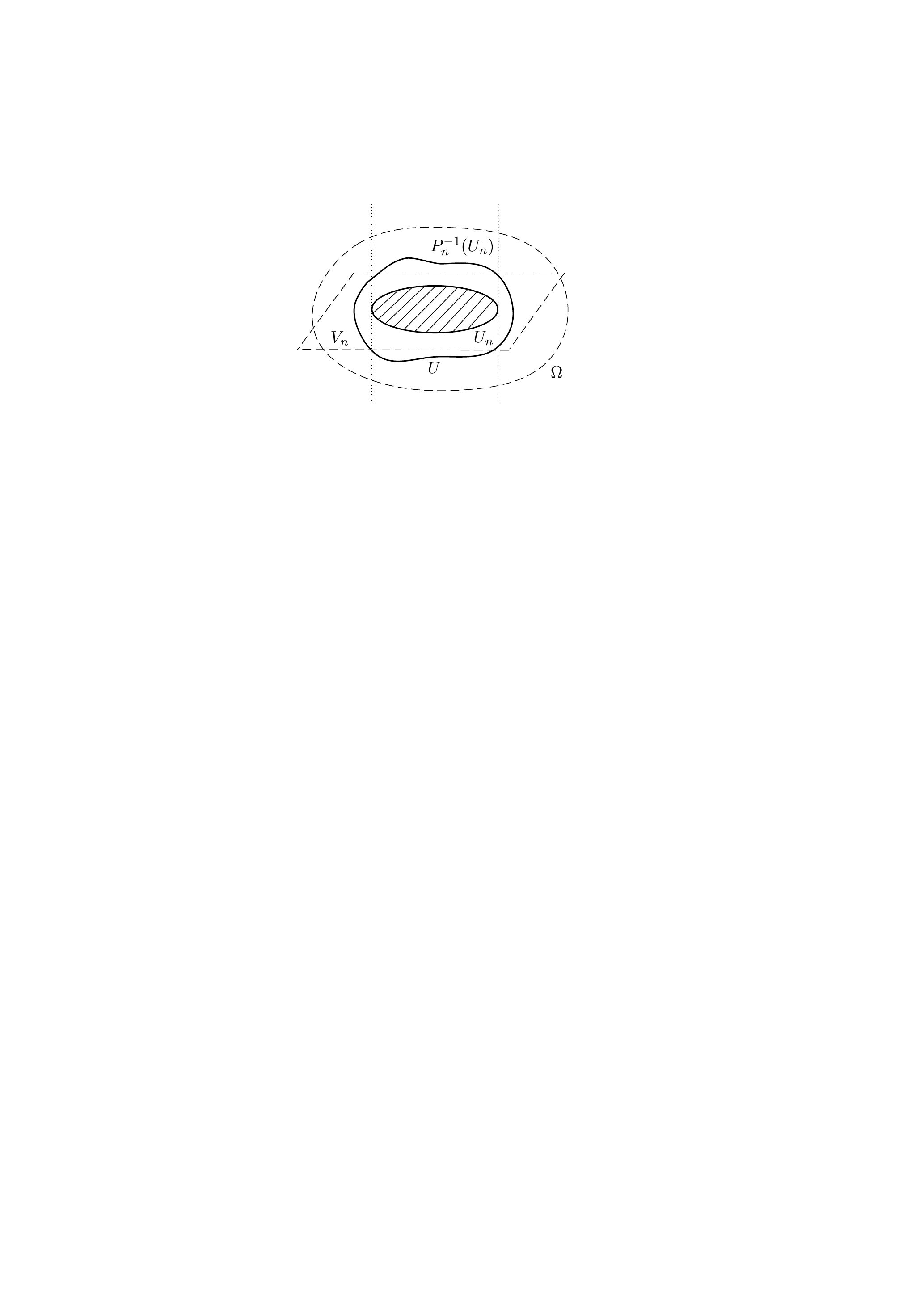}
\caption{Domains in \eqref{eq:sim}}
\label{fig:sets}
\end{figure}

\vspace{1mm}\noindent\textbf{Step 2.}
The same reasoning can be applied to the map $g$.
Similarly as for $f$ let us introduce the notation
$\Sigma g_n$ and a set $W\subset D_g$ 
(a counterpart of $U\subset D_f$).
We obtain in this way an analogical sequence of relations,
which gives 
$g\sim\restrictionmap{\Sigma g_n}{P_n^{-1}(W_n)\cap\Omega}$.
To finish the proof of injectivity it is enough
to show that
\[
\restrictionmap{\Sigma f_n}{P_n^{-1}(U_n)\cap\Omega}
\sim
\restrictionmap{\Sigma g_n}{P_n^{-1}(W_n)\cap\Omega}.
\]
To do that we will use Lemma~\ref{lem:aotopy},
which shows how to suspend finite dimensional otopies.

By the definition of our degree,
we have 
\[
\deg\restrictionmap{f_n}{U_n}=
\Deg f=\Deg g=\deg\restrictionmap{g_n}{W_n}.
\]
Unfortunately, this does not imply that
$\restrictionmap{f_n}{U_n}$ and
$\restrictionmap{g_n}{W_n}$ are finite dimensionally otopic,
since $U_n$ and $W_n$ may not be contained in the same
component of $\Omega_n$. Therefore, 
using Lemma~\ref{lem:aconnected}, the problem
will be lifted to a higher dimension, where
the relation of otopy holds.

Precisely, note first 
that $f_n^{-1}(0)\subset U_n$, $g_n^{-1}(0)\subset W_n$
and $K=f_n^{-1}(0)\cup g_n^{-1}(0)\subset V_n$
is compact. By Lemma~\ref{lem:aconnected},
$K$ is contained in one component of $\Omega_m$ 
for $m\ge n$ large enough. 
Let us denote this component by $\Omega'_m$.

Set $X=P_n^{-1}(U_n)\cap U\cap\Omega'_m$ and
$Y=P_n^{-1}(W_n)\cap W\cap\Omega'_m$.
Observe that 
\begin{itemize}
\item $X$ and $Y$ are open bounded,
\item $\cl X\subset\cl U\subset D_f$ and 
      $\cl Y\subset\cl W\subset D_g$,
\item $X\cup Y\subset\Omega'_m\subset V_m$.
\end{itemize}
Since $\deg\restrictionmap{f_n}{U_n}=
\deg\restrictionmap{g_n}{W_n}$, we have
$\deg\Sigma\restrictionmap{f_n}{X}=
\deg\Sigma\restrictionmap{g_n}{Y}$.
Moreover, the maps $\Sigma\restrictionmap{f_n}{X}$ and 
$\Sigma\restrictionmap{g_n}{Y}$ are bounded,
because $\Sigma f_n$ and $\Sigma g_n$ are defined 
on $\cl X$ and $\cl Y$ respectively.
Since $\Omega'_m$ is connected there is a bounded
finite dimensional otopy
$k\colon\Gamma\subset I\times\Omega'_m\to V_m$
between $\Sigma\restrictionmap{f_n}{X}$ and 
$\Sigma\restrictionmap{g_n}{Y}$ (see \cite[Rem. 2.3]{B}).
By Lemma~\ref{lem:aotopy}, there is an otopy in $\Omega$
between $\restrictionmap{\Sigma f_n}{P_m^{-1}(X)\cap\Omega}$ and 
$\restrictionmap{\Sigma g_n}{P_m^{-1}(Y)\cap\Omega}$.
Finally, since $P_m^{-1}(X)\subset P_m^{-1}(P_n^{-1}(U_n))
= P_n^{-1}(U_n)$ and similarly
$P_m^{-1}(Y)\subset P_n^{-1}(W_n)$, we obtain
\[
\restrictionmap{\Sigma f_n}{P_n^{-1}(U_n)\cap\Omega}
\sim\restrictionmap{\Sigma f_n}{P_m^{-1}(X)\cap\Omega}
\sim\restrictionmap{\Sigma g_n}{P_m^{-1}(Y)\cap\Omega}
\sim\restrictionmap{\Sigma g_n}{P_n^{-1}(W_n)\cap\Omega},
\] 
which completes the proof
of injectivity of $\Deg\colon\cC[\Omega]\to\Z$.

\subsection{Injectivity of \texorpdfstring{$\Deg\colon\G[\Omega]\to\Z$}{gradient Deg}}
Let $f,g\in\G(\Omega)$ and $\Deg f=\Deg g$.
To show that $[f]=[g]$ in $\G[\Omega]$
it is enough to observe that all otopies appearing
in the sequence connecting $f$ and $g$ as in \ref{subsec:inj}.
are in fact gradient. Namely
\begin{enumerate}
\item otopies connecting gradient local maps with their restrictions
are obviously gradient,
\item the straight-line  homotopy $(1-t)f+t\Sigma f_n$ is gradient,
because $f$ and $\Sigma f_n$ are gradient (note that if
$\varphi$ is a potential for $F$ then $\varphi\circ P_n$
is a potential for $P_nFP_n$), 
\item the otopy between 
$\restrictionmap{\Sigma f_n}{P_m^{-1}(X)\cap\Omega}$ and 
$\restrictionmap{\Sigma g_n}{P_m^{-1}(Y)\cap\Omega}$
appearing in Step 2 of \ref{subsec:inj} 
(see Lemma~\ref{lem:aotopy}) can be considered gradient,
because by Main Theorem in \cite[Sec. 2]{BP4}
$k(t,x)$ can be chosen gradient
(if $\varphi(t,x)$ is a family of potentials for $k(t,x)$
then we can take $\varphi(t,x)+\frac12\abs{y}^2$
as a family of potentials for for our otopy). 
\end{enumerate}

\subsection{Surjectivity of 
\texorpdfstring{$\Deg\colon\G[\Omega]\to\Z$}{gradient Deg}}\label{subsec:sur}
Using standard local maps (see Section 3 in \cite{BP1}) it is easy
to construct for any $m\in\Z$ a gradient local map
$f\colon D_f\subset V_n\cap\Omega\to V_n$ such that $\deg f=m$
($V_n\cap\Omega$ is nonempty for $n$ large enough).
Since as we observed suspensions of gradient local maps are 
also gradient local, the map $\Sigma f\colon P_n^{-1}(D_f)\cap\Omega\to E$
is an element of $\G[\Omega]$ and $\Deg \Sigma f=\deg f=m$.

\subsection{Surjectivity of 
\texorpdfstring{$\Deg\colon\cC[\Omega]\to\Z$}{Deg}}
Since any gradient local map is also a local map, 
it is an obvious consequence of~\ref{subsec:sur}.
\qed

\section{Final remarks} 
\label{sec:final}

This section is devoted to two possible directions
of developments of subject presented here.
Namely, we can additionally consider a group action and/or
linear operators other than identity.

\subsection{The case of a compact Lie group action}\label{subsec:Lie}
In \cite{BP5} we proved that for a finite dimensional
representation of a compact Lie group the function
induced on the sets on otopy classes by the inclusion 
of the set of equivariant gradient local maps into 
the set of equivariant local maps is a bijection
if and only if all Weyl groups appearing in 
the representation are finite. In consequence,
contrary to our Theorem B the function $\iota$
need not be bijective. We expect that an analogical
result holds for a Hilbert representation
of a compact Lie group. Here we will just give 
an example of two equivariant gradient local maps 
in Hilbert space that are otopic but not gradient otopic,
which illustrates that the function analogical
to $\iota$ in Theorem B may not be bijective.

\begin{ex}
Let $E'$ be a Hilbert space and $E=\mathbb{C}\oplus E'$.
Assume that $S^1$ acts on $\mathbb{C}\oplus E'$ 
by $g(z,x)=(gz,x)$. Consider for $i=0,1$ potentials 
$\varphi_i\colon\mathbb{C}\to\R$ given by
\[
\varphi_i(z)=\begin{cases}
(\abs{z}-1)^2& \text{if $\abs{z}\ge1$},\\
(1-2i)(\abs{z}-1)^2& \text{if $\abs{z}<1$}.
\end{cases}
\]
Set $f_i=\nabla\varphi_i$ (see Figure \ref{fig:concentric})
and $U=\{z\in\mathbb{C}\mid 1/2<\abs{z}<3/2\}$.
Let $\id$ denote the identity on $E'$.
Define $\wt{f}_i\colon U\times E'\to E$ 
by $\wt{f}_i=f_i\times\id$. 
It follows easily that $\wt{f}_0$ and $\wt{f}_1$
are equivariant otopic. We expect that it is possible
to show that they are not equivariant gradient otopic.
\end{ex}

\begin{figure}[ht]
\centering
\includegraphics[scale=0.8,trim= 0mm 0mm 0mm 0mm]{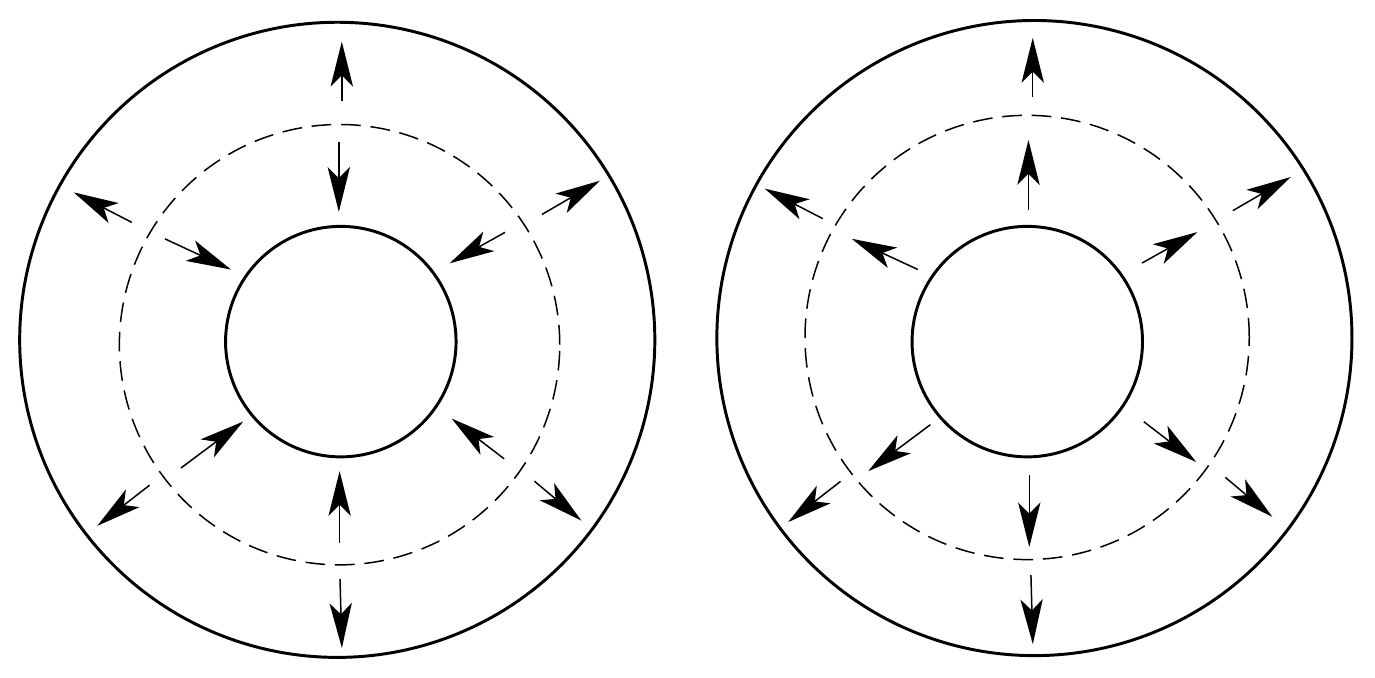}
\caption{Maps $f_0$ and $f_1$}
\label{fig:concentric}
\end{figure}

\subsection{The case of an unbounded operator}
In this paper we considered perturbations of 
the identity operator in Hilbert space.
Possible applications in Hamiltonian systems
and in the Seiberg-Witten theory suggest
replacing the identity by an unbounded 
self-adjoint operator with a purely discrete spectrum.
In that case in the absence of a group action
we expect the result similar to Theorem B.
However, if we take into account a group action
similarly as in Subsection \ref{subsec:Lie}
we may obtain the function $\iota$ that is not bijective.
This means that in the equivariant gradient case
we may get an extra topological invariant.

\appendix


\section{}\label{sec:appendixA}

In this appendix we have collected some technical
results needed in Section~\ref{sec:proof}.

\begin{lem}\label{lem:acompact}
Let $\Omega$ be an open subset of a separable Hilbert space $E$
and $K$ a compact subset of $\Omega$.
There exist an open bounded $U\subset E$
and natural number $N$ such that
\begin{itemize}
\item $K\subset U\subset\cl U\subset\Omega$,
\item $P_n(\cl U)\subset\Omega$ for all $n\ge N$.
\end{itemize}
\end{lem}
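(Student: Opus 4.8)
The plan is to take $U$ to be a thin open tube around $K$ and to control the projections by exploiting that the \emph{compact} set $K$, rather than the merely closed and bounded set $\cl U$, is where uniform approximation by $P_n$ is available.

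The first bullet is the standard separation of a compact set from a closed one. If $\Omega=E$ there is nothing to do, since then $P_n(\cl U)\subset E=\Omega$ for every bounded $U$ and any $N$. Otherwise $E\setminus\Omega$ is closed and disjoint from the compact set $K$, so $d:=\operatorname{dist}(K,E\setminus\Omega)>0$. Fixing $\delta<d$ and letting $U=\{x\in E\mid\operatorname{dist}(x,K)<\delta\}$, I get an open set containing $K$; it is bounded because $K$ is bounded, and $\cl U\subset\{x\mid\operatorname{dist}(x,K)\le\delta\}\subset\Omega$ because no point within distance $\delta<d$ of $K$ can lie in $E\setminus\Omega$. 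This already yields $K\subset U\subset\cl U\subset\Omega$.

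The real content is the second bullet, and here the naive attempt fails: one would like $P_n\to\id$ uniformly on $\cl U$, but $\cl U$ is not compact in infinite dimensions, so Proposition~\ref{prop:compact} does not apply to it directly. The fix is to combine uniform approximation on $K$ with the non-expansiveness $\abs{P_n z}\le\abs{z}$ of the orthogonal projections. Concretely, I would shrink $\delta$ to $\delta=d/4$ (keeping the conclusions of the previous paragraph), and then invoke Proposition~\ref{prop:compact} for the relatively compact set $K$ to choose $N$ with $\abs{k-P_nk}<d/4$ for all $n\ge N$ and all $k\in K$.

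It then remains to estimate. Given $n\ge N$ and $x\in\cl U$, compactness of $K$ yields a nearest point $k\in K$ with $\abs{x-k}\le\delta=d/4$, and since $P_n$ is non-expansive,
\[
\abs{P_nx-k}\le\abs{P_nx-P_nk}+\abs{P_nk-k}\le\abs{x-k}+\abs{P_nk-k}<\tfrac{d}{4}+\tfrac{d}{4}=\tfrac{d}{2}<d.
\]
Thus $\operatorname{dist}(P_nx,K)<d$, which forces $P_nx\in\Omega$, giving $P_n(\cl U)\subset\Omega$ for all $n\ge N$. The step I expect to be the genuine obstacle is precisely the one just resolved: recognizing that uniform control must be extracted from $K$, where Proposition~\ref{prop:compact} is usable, and then transported to the whole tube $\cl U$ via $\abs{P_n(x-k)}\le\abs{x-k}$, rather than attempting to approximate on $\cl U$ itself.
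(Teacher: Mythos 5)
Your proof is correct, and it takes a recognizably different route from the paper's, though both hinge on the same two pillars: compactness of $K$ and the non-expansiveness $\abs{P_ny-P_nx}\le\abs{y-x}$ of orthogonal projections. You build $U$ as a uniform metric tube of radius $d/4$ around $K$, where $d=\operatorname{dist}(K,E\setminus\Omega)$, and you obtain the needed uniformity of the approximation $\abs{P_nk-k}<d/4$ on all of $K$ by invoking Proposition~\ref{prop:compact}; the transfer from $K$ to $\cl U$ is then a single triangle-inequality estimate. The paper instead works locally: for each $x\in K$ it picks a ball $B(x,R_x)\subset\Omega$ and an $N_x$ with $\abs{P_nx-x}<R_x/2$, uses non-expansiveness to conclude $P_n\bigl(D(x,R_x/2)\bigr)\subset B(x,R_x)$, and then takes a finite subcover, setting $U=\bigcup_i B(x_i,R_{x_i}/2)$ and $N=\max_i N_{x_i}$; uniformity there comes for free from maximizing over a finite set, so only pointwise convergence $P_nx\to x$ at the finitely many centers is ever used. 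What your version buys is a cleaner, global construction with a single radius (at the cost of the case split $\Omega=E$, needed so that $d$ is defined, and of leaning on Proposition~\ref{prop:compact}); what the paper's version buys is self-containedness, since it effectively re-derives the relevant fragment of that proposition via the finite cover, and its $U$ adapts to the local geometry of $\Omega$ through the varying radii $R_{x_i}$. Both arguments are complete and valid.
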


\begin{proof}
Let us denote by $B(x,R)$ the open ball
and  by $D(x,R)$ the closed ball in $E$
of radius $R>0$ centered at $x$.
Note that for any $x\in\Omega$ there is $R_x>0$ and $N_x\in\N$
such that $B(x,R_x)\subset\Omega$ and $\abs{P_nx-x}<R_x/2$
for $n\ge N_x$. If $\abs{y-x}\le R_x/2$ then
$\abs{P_ny-x}\le\abs{P_ny-P_nx}+\abs{P_nx-x}
<\abs{y-x}+R_x/2\le R_x$, and hence $P_ny\in B(x,R_x)\subset\Omega$.
In other words $P_n(D(x,R_x/2))\subset\Omega$ for $n\ge N_x$.
Since $K$ is compact, we can choose
$x_1,\dotsc,x_m\in K$ such that
$K\subset\bigcup_{i=1}^m B(x_i,R_{x_i}/2)$.
Set $U=\bigcup_{i=1}^m B(x_i,R_{x_i}/2)$ and 
$N=\max{\{N_{x_i}\mid i=1,\dotsc,m\}}$.
It is easy to see that
\[
P_n(\cl U)=
P_n\Bigl(\bigcup_{i=1}^m D(x_i,R_{x_i}/2)\Bigr)
\subset\Omega
\]
for $n\ge N$.
\end{proof}

\begin{cor}\label{cor:apath}
With the same notation and assumptions as above,
there is $N$ such that $P_n(K)\subset\Omega$
for $n\ge N$.
\end{cor}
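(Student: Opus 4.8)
The plan is to derive this directly from Lemma~\ref{lem:acompact}, which has already produced an open bounded set $U$ together with an index $N$ satisfying $K\subset U\subset\cl U\subset\Omega$ and $P_n(\cl U)\subset\Omega$ for all $n\ge N$. The whole point is that \emph{the same} $N$ works for the corollary, so no new construction is needed; one only has to pass from the larger set $\cl U$ to its subset $K$.

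First I would record the inclusion $K\subset\cl U$, which is contained in the first bullet of the lemma (in fact $K\subset U\subset\cl U$). Applying the projection $P_n$ to both sides and using that taking images preserves inclusions, I obtain $P_n(K)\subset P_n(\cl U)$ for every $n$. Next I would invoke the second bullet of the lemma, namely $P_n(\cl U)\subset\Omega$ for all $n\ge N$. Chaining the two inclusions gives $P_n(K)\subset P_n(\cl U)\subset\Omega$ for all $n\ge N$, which is precisely the assertion.

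I do not expect any genuine obstacle here: the entire analytic content --- the interplay between compactness of $K$, the neighbourhood $U$, and the approximation of points of $\Omega$ by their finite-dimensional projections --- is already absorbed into the proof of Lemma~\ref{lem:acompact}. The corollary is merely the specialization of that conclusion to the compact set $K$ in place of the thickened neighbourhood $\cl U$, and its proof is a one-line monotonicity argument.
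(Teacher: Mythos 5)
Your proof is correct and matches the paper's intent exactly: the paper gives no separate proof but remarks that the corollary is an immediate consequence of Lemma~\ref{lem:acompact}, which is precisely your one-line argument $P_n(K)\subset P_n(\cl U)\subset\Omega$ for $n\ge N$.
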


\begin{rem}
The corollary is an immediate consequence of Lemma~\ref{lem:acompact},
but it can also be easily concluded from the characterization
of compact sets in $E$ (Prop. \ref{prop:compact}).
\end{rem}

\begin{lem}\label{lem:aconnected}
Let $\Omega$ be an open connected subset of $E$
and $K\subset\Omega_n:=\Omega\cap V_n$ be compact.
Then $K$ is contained in one component of $\Omega_m$
for $m$ large enough.
\end{lem}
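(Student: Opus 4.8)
The plan is to reduce the problem to a single application of Corollary~\ref{cor:apath}. The difficulty is that $K$ itself need not be connected, and even the image of a connected set under a projection could a priori be split among several components of $\Omega_m$; so the issue is to produce, for large $m$, a \emph{connected} subset of $\Omega_m$ that captures all of $K$. My strategy is to first enlarge $K$ to a \emph{compact connected} set $L$ with $K\subset L\subset\Omega$, and then to project $L$ into $V_m$. If $K=\emptyset$ the assertion is vacuous, so I assume $K\neq\emptyset$ and fix a base point $x_1\in K$.

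First I would construct $L$. Since $\Omega$ is open in the normed space $E$ it is locally path-connected, and being connected it is therefore path-connected. As $K$ is compact and $\Omega$ open, each $x\in K$ admits a radius $r_x>0$ with the closed ball $D(x,r_x)\subset\Omega$; by compactness finitely many open balls $B(x_i,r_{x_i}/2)$, $i=1,\dots,k$, cover $K$, and the corresponding closed balls $D(x_i,r_{x_i}/2)\subset\Omega$ are compact and, being convex, connected. Joining $x_1$ to each $x_i$ by a path in $\Omega$ and letting $S_i$ denote its (compact, connected) image, I set
\[
L=\bigcup_{i=1}^k D(x_i,r_{x_i}/2)\cup\bigcup_{i=1}^k S_i.
\]
Then $L$ is a finite union of compact sets, hence compact; it is contained in $\Omega$; it contains $K$ because $K\subset\bigcup_i B(x_i,r_{x_i}/2)$; and it is connected, since every piece meets the common point $x_1$ (each $S_i$ contains $x_1$, while $x_i\in D(x_i,r_{x_i}/2)$).

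Second, applying Corollary~\ref{cor:apath} to the compact set $L\subset\Omega$ yields an $N$ with $P_m(L)\subset\Omega$ for all $m\ge N$. For $m\ge\max\{n,N\}$ the set $P_m(L)$ is connected, being the continuous image of the connected set $L$, and it lies in $V_m$, so $P_m(L)\subset\Omega\cap V_m=\Omega_m$; thus $P_m(L)$ is contained in a single component of $\Omega_m$. Finally, since $K\subset\Omega_n\subset V_n\subset V_m$ and $P_m$ restricts to the identity on $V_m$, we have $K=P_m(K)\subset P_m(L)$, so $K$ lies in one component of $\Omega_m$ for every $m\ge\max\{n,N\}$, as required. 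The only genuinely nontrivial step is the construction of the connecting set $L$, which converts the global connectedness of $\Omega$ into a single compact connected witness; once that is in hand, the projection argument together with the identity $P_m\restriction V_m=\id$ does the rest mechanically.
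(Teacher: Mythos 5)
Your proof is correct and follows essentially the same route as the paper's: a finite ball cover of $K$ obtained from compactness, connecting paths supplied by the (path-)connectedness of the open set $\Omega$, and Corollary~\ref{cor:apath} to push the connecting data into $\Omega_m$ for large $m$. The only difference is packaging: you merge the balls and paths into a single compact connected set $L$ and apply the corollary once, using that $P_m(L)$ is connected and fixes $K$ pointwise, whereas the paper keeps the balls inside $\Omega_n\subset\Omega_m$ and applies the corollary separately to each connecting path $\omega_{ij}$; both versions are sound.
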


\begin{proof}
Since $K$ is compact, it can be covered by a finite number
of balls $B_i\subset\Omega_n$. $\Omega$ is connected,
so there is a path $\omega_{ij}\subset\Omega$ from $B_i$ to $B_j$
for each pair  $i,j$. By Corollary \ref{cor:apath},
$P_{l_{ij}}(\omega_{ij})\subset\Omega_{l_{ij}}$ 
for $l_{ij}$ sufficiently large,
so all balls $B_i$ are contained in the same 
component of $\Omega_m$, where $m:=\max{\{l_{ij}\mid i,j\}}$.
\end{proof}

Let $\Omega_n:=\Omega\cap V_n$ and
$\Gamma\subset I\times \Omega_n$ are open.
Assume that $k\colon\Gamma\to V_n$ is a bounded
finite dimensional otopy
(see Remark \ref{rem:finite}). Let us define:
\begin{itemize}
\item $\Lambda=
\big(\Gamma\times V_n^{\bot}\big)\cap\big( I\times\Omega\big)$
\item $h\colon\Lambda\to E$ given by $h(t,x,y)=(k(t,x),y)$,
where $t\in I$, $x\in V_n$, $y\in V_n^{\bot}$
(note that $(t,x,y)\in\Lambda$ implies $(t,x)\in\Gamma$).
\end{itemize}

\begin{lem}\label{lem:aotopy}
$h$ is an otopy in $\Omega$.
\end{lem}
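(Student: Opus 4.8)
The plan is to verify directly the three requirements in the definition of an otopy for the map $h$, namely that $\Lambda$ is open in $I\times\Omega$, that $h$ is of the form identity minus a compact map, and that $h^{-1}(0)$ is compact. Throughout I would work with the orthogonal splitting $E=V_n\oplus V_n^{\bot}$, writing a point of $E$ as $z=(x,y)$ with $x=P_nz\in V_n$ and $y=z-P_nz\in V_n^{\bot}$.

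First I would settle openness and well-definedness. Since $\Omega$ is open in $E$, the set $\Omega_n=\Omega\cap V_n$ is open in $V_n$, so $\Gamma$, being open in $I\times\Omega_n$, is in fact open in $I\times V_n$. Hence $\Gamma\times V_n^{\bot}$ is open in $I\times E$ and $\Lambda=(\Gamma\times V_n^{\bot})\cap(I\times\Omega)$ is open in $I\times\Omega$. As already observed, every $(t,x,y)\in\Lambda$ satisfies $(t,x)\in\Gamma$, so $k(t,x)$ is defined and $h$ makes sense.

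Next comes the heart of the argument: exhibiting the compact perturbation. A short computation gives $z-h(t,z)=(x,y)-(k(t,x),y)=(x-k(t,x),0)$, so the compact part is $F(t,z)=(x-k(t,x),0)$, which crucially takes values in the \emph{finite-dimensional} subspace $V_n$. Because $k$ is a bounded finite-dimensional otopy, the map $(t,x)\mapsto x-k(t,x)$ is compact, its image lying in $V_n$ where boundedness already yields relative compactness. Writing $F$ as the composite of this map with the projection $(t,x,y)\mapsto(t,x)$ and the inclusion $V_n\hookrightarrow E$, and noting that the projection carries bounded sets to bounded sets, I would conclude that $F$ maps each bounded subset of $\Lambda$ into a relatively compact subset of $E$; thus $F$ is compact.

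Finally I would compute the zero set. The equation $h(t,x,y)=0$ is equivalent to $k(t,x)=0$ together with $y=0$; moreover $(t,x)\in\Gamma\subset I\times\Omega_n$ already forces $(t,x,0)\in I\times\Omega$, so the extra constraint imposed by $\Lambda$ is vacuous. Hence $h^{-1}(0)=k^{-1}(0)\times\{0\}$, which is compact since $k^{-1}(0)$ is. The only genuinely delicate step is the compactness of $F$, and its resolution rests on the single observation that $h$ leaves the $V_n^{\bot}$-coordinate fixed, so that $F$ is finite-dimensional-valued and its compactness reduces entirely to the boundedness built into the hypothesis on $k$; the remaining verifications are routine bookkeeping with the splitting $E=V_n\oplus V_n^{\bot}$.
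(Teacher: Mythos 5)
Your proposal is correct and follows essentially the same route as the paper's proof: the compact part is $F(t,x,y)=(x-k(t,x),0)$, whose compactness follows from the boundedness of $k$ together with the finite-dimensionality of $V_n$, and the zero set is $k^{-1}(0)\times\{0\}$, compact because $k^{-1}(0)$ is. Your additional verification of the openness of $\Lambda$ and of the identity $h^{-1}(0)=k^{-1}(0)\times\{0\}$ (including the observation that the constraint from $I\times\Omega$ is vacuous on the zero set) is routine bookkeeping that the paper leaves implicit.
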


\begin{proof}
Observe that
\begin{enumerate}
	\item $h^{-1}(0)=k^{-1}(0)\times\{0\}$ is compact,
	\item $k$ is bounded and hence 
	 $\restrictionmap{\id}{\Omega_n}-k$ is compact;
	 in consequence $h(t,x,y)=(x,y)-(x-k(t,x),0)$
	 is of the desired form `identity minus compact'.
\end{enumerate}
Thus $h$ is an otopy in $\Omega$.
\end{proof}

\end{document}